\pgfplotsset{compat=1.14}
\newcommand{\Haus}{\dim_{\mathrm{H}}}
\newcommand{\hd}{\dim_{\mathrm{H}}}
\newcommand{\pd}{\dim_{\mathrm{P}}}
\newcommand{\Lower}{\dim_{\mathrm{L}}}
\newcommand{\Assouad}{\dim_{\mathrm{A}}}
\newcommand{\LBox}{\underline{\dim}_{\mathrm{B}}}
\newcommand{\UBox}{\overline{\dim}_{\mathrm{B}}}
\newcommand{\boxd}{\dim_\mathrm{B}}
\newcommand{\uboxd}{\overline{\dim}_\mathrm{B}}
\newcommand{\lboxd}{\underline{\dim}_\mathrm{B}}
\numberwithin{equation}{section}
\newtheorem*{thm*}{Theorem}
\newtheorem*{conj*}{Conjecture}
\newtheorem*{rem*}{Remark}
\newtheorem*{lma*}{Lemma}
\newtheorem{thm}{Theorem}[section]
\newtheorem{lma}[thm]{Lemma}
\newtheorem{cor}[thm]{Corollary}
\newtheorem{defn}[thm]{Definition}
\newtheorem{prop}[thm]{Proposition}
\newtheorem{ques}[thm]{Question}
\begin{document}
	
	\title{On the Hausdorff dimension of microsets}
	
	\author[J. M. Fraser]{Jonathan M. Fraser}
	\address{Jonathan M. Fraser\\
		School of Mathematics \& Statistics\\University of St Andrews\\ St Andrews\\ KY16 9SS\\ UK }
	\curraddr{}
	\email{jmf32@st-andrews.ac.uk}
	
	\author[D. C. Howroyd]{Douglas C. Howroyd}
	\address{Douglas C. Howroyd\\
		School of Mathematics \& Statistics\\University of St Andrews\\ St Andrews\\ KY16 9SS\\ UK }
	\curraddr{}
	\email{dch8@st-andrews.ac.uk}
	
	\author[A. K\"aenm\"aki]{Antti K{\"a}enm{\"a}ki}
	\address{Antti K{\"a}enm{\"a}ki\\
		Department of Physics and Mathematics\\ University of Eastern Finland\\ P.O. Box 111\\ FI-80101 Joensuu\\ Finland }
	\curraddr{}
	\email{antti.kaenmaki@uef.fi}
	
	\author[H. Yu]{Han Yu}
	\address{Han Yu\\
		School of Mathematics \& Statistics\\University of St Andrews\\ St Andrews\\ KY16 9SS\\ UK }
	\curraddr{}
	\email{hy25@st-andrews.ac.uk}
	
	\thanks{}

	\subjclass[2010]{Primary: 28A80, Secondary: 28A78}
	
	\keywords{Weak tangent; Microset; Hausdorff dimension; Assouad type dimensions;}
	
	\date{}
	
	\dedicatory{}
	
	\begin{abstract}
		We investigate how the Hausdorff dimensions of microsets are related to the  dimensions of the original set. It is known that the maximal dimension of a microset is the Assouad dimension of the set. We prove that the lower dimension can analogously be obtained as the minimal dimension of a microset. In particular, the maximum and minimum exist.  We also show that for an arbitrary $\mathcal{F}_\sigma$ set $\Delta \subseteq [0,d]$ containing its infimum and supremum there is a compact set in $[0,1]^d$ for which the set of Hausdorff dimensions attained by its microsets is  exactly equal to  the set $\Delta$.  Our work is motivated by the general programme of determining what geometric information about a set can be determined at the level of tangents.
	\end{abstract}
	
	\maketitle
	\section{Introduction}
	To calculate the dimension of a set it is often important to understand its infinitesimal structure. This leads us to the notion of microsets introduced by Furstenberg \cite{Fu}. They are sets that are obtained as limits of successive magnifications of the original set. From a dynamical point of view, the collection of all microsets together with the magnification action define a dynamical system. The study of this dynamical system is known as the theory of CP-chains. For more details in this direction, see also \cite{FFS,Fu,H10,HS12,KSS15}. In this paper, we want to study the collection of \emph{all} microsets. This collection heuristically represents all possible fine structures  of a set. For general compact sets the structure of this collection is very rich; see \cite{CR}.
	
	The Assouad dimension characterises how large the densest part of a set is. It is known that the greatest Hausdorff dimension of all microsets of a set $F$ is equal to the Assouad dimension of $F$. In much the same way, the lower dimension reflects how sparse a set can be and it is natural to expect that the smallest microset of a set $F$ represents the lower dimension of $F$. This is our first result.  
	
	\begin{thm}\label{ThMain}
		For any compact set $F\subset \mathbb{R}^d$ we have
		\[
		\Lower F=\min_{E\in\mathcal{G}_F} \Haus E=\min_{E\in\mathcal{G}_F} \uboxd E.
		\]
		In particular, this minimum exists.
	\end{thm}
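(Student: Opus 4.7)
The plan is to establish two inequalities that, combined with the routine comparison $\Haus \le \uboxd$, yield both equalities in the theorem and the attainment of the minimum. The first is a lower bound $\Haus E \ge \Lower F$ for every $E \in \mathcal{G}_F$, which is relatively soft. The second is the existence of a single microset $E^* \in \mathcal{G}_F$ with $\uboxd E^* \le \Lower F$, which I expect to be the technical heart of the argument.

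For the lower bound, the key observation is that the lower dimension does not decrease under the microset construction, i.e., $\Lower E \ge \Lower F$ for every $E \in \mathcal{G}_F$. Writing $E$ as a Hausdorff limit of sets of the form $E_n := T_n(F) \cap [0,1]^d$ for expanding similarities $T_n$, fix $y \in E$ and $0 < r < R$ and approximate $y$ by points $y_n \in E_n$. Any $r$-cover of $B(y,R) \cap E$ inflates to an $(r+o(1))$-cover of $B(y_n, R-o(1)) \cap E_n$ for $n$ large; pulling back via $T_n^{-1}$ transports this to a cover of the corresponding piece of $F$ at contracted scales, at which point the definition of $\Lower F$ forces the size to be at least a constant multiple of $(R/r)^{\Lower F}$. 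Letting $n \to \infty$ yields $\Lower E \ge \Lower F$, and the standard chain $\Lower E \le \Haus E$ for nonempty compact $E$ completes this half.

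For the second and harder direction, set $\alpha := \Lower F$. For each $s > \alpha$, the failure of the lower dimension property at exponent $s$ supplies $x \in F$ and $0 < r < R$ with $R$ arbitrarily small such that $N(B(x,R) \cap F, r) < \varepsilon(R/r)^s$ for any prescribed $\varepsilon$. The obstacle is that this furnishes sparsity at only \emph{one} scale, whereas bounding $\uboxd$ of a microset requires sparsity across \emph{all} subscales simultaneously. I would address this by an iterative construction, nesting applications of the failure of lower dimension inside successively smaller balls so as to produce, for each $s > \alpha$, points $x_n \in F$ and radii $R_n \to 0$ together with a uniform estimate $N(B(x_n,R_n) \cap F, \rho R_n) \le C_s \rho^{-s}$ valid for $\rho \in (0,1)$. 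Rescaling by $1/R_n$ and recentering at $x_n$ produces sets in $[0,1]^d$ whose Hausdorff subsequential limit $E^{(s)}$ is a microset satisfying $N(E^{(s)}, \rho) \le C_s(\rho/2)^{-s}$ for small $\rho$ (using that $N(E^{(s)},\rho)$ is controlled by $N$ of the approximants at scale $\rho/2$ once the Hausdorff distance is below $\rho/2$), so $\uboxd E^{(s)} \le s$. Choosing $s_k \downarrow \alpha$ and taking a further Hausdorff limit of $(E^{(s_k)})$---which remains a microset, since microsets of microsets are microsets---yields, via a careful diagonal choice of constants, the desired $E^*$ with $\uboxd E^* \le \alpha$.

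The principal obstacle is the simultaneous-sparsity step: upgrading the purely existential single-scale sparsity that the definition of $\Lower F$ affords to uniform sparsity across all subscales inside a single ball. This is the dual counterpart of the technical step used to show that $\Assouad F$ equals the maximum Hausdorff dimension of a microset, and I expect it to require either a nested-ball construction combined with a pigeonhole over dyadic scales, or else an appeal to a compactness/diagonalization argument inside $\mathcal{G}_F$ itself, exploiting that microsets of microsets remain microsets.
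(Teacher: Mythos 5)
Your decomposition into the two inequalities is the same one the paper uses, and your lower-bound half (via $\Lower E \ge \Lower F$ for $E \in \mathcal{G}_F$) is essentially Proposition \ref{prop:lower-lower-bound}. One caveat you should be aware of: a microset $E$ may have an isolated point on the boundary of $[0,1]^d$, which forces $\Lower E = 0$ and breaks the chain $\Lower E \le \Haus E$; the paper sidesteps this by showing $\Lower F \le \Lower (E \cap (0,1)^d) \le \Haus E$. This is a fixable slip, not a fundamental problem.

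The genuine gap is in the hard direction. You correctly isolate the principal obstacle---upgrading the single-scale sparsity that failure of lower dimension at exponent $s > \Lower F$ provides to uniform sparsity across an unbounded range of subscales inside one ball---but you defer it, offering two candidate strategies without executing either, and in fact this step is the technical heart of the entire theorem. A ``pigeonhole over dyadic scales'' is the right instinct but it is considerably more than a naive pigeonhole. The paper devotes all of Section \ref{Tree} to a tree regularity lemma (Lemma \ref{reg}): encoding $F \subseteq [0,1]$ as a tidy dyadic tree, it proves that local $(s,m)$-largeness (every vertex admits some height $n \le m$ at which its descendant subtree has at least $2^{sn}$ leaves) forces global $(s,2^{-sm})$-largeness (at least $2^{s(N-m)}$ vertices at every level $N$), via a greedy subtree construction together with a weight-counting argument on leaves. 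Applied in contrapositive to the shallow sparse subtrees that $\Lower F < s+\varepsilon$ supplies, this produces for each $m$ a subtree that is sparse at \emph{all} scales $1,\dots,m$ simultaneously, and a Cantor diagonal over $\varepsilon \to 0$ and $m \to \infty$ then yields the extremal microset. Your second alternative---compactness within $\mathcal{G}_F$ together with ``microsets of microsets are microsets''---correctly handles the final diagonalization but contributes nothing toward the multi-scale sparsity upgrade itself. So the outline matches the paper, but the key combinatorial lemma is absent from your argument.
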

	
	Here $\Lower$ stands for the lower dimension, $\Haus$ for the Hausdorff dimension, $\uboxd$ for the upper box dimension, and $\mathcal{G}_F$ for the gallery of $F$; see Section \ref{prelim} for the precise definitions. Combining this result with the analogous one for the Assouad dimension, we obtain the following corollary.
	
	\begin{cor}\label{maincor}
		For any compact set $F\subset \mathbb{R}^d$, all elements in $\mathcal{G}_F$ have the same Hausdorff dimension if and only if
		\[
		\Lower F=\Assouad F.
		\]
	\end{cor}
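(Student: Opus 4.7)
The plan is to observe that this corollary is essentially a trivial consequence of Theorem \ref{ThMain} combined with the well-known analogue for the Assouad dimension mentioned in the introduction, namely that $\Assouad F = \max_{E \in \mathcal{G}_F} \Haus E$ (a result that should be cited precisely, presumably from Furstenberg's work or the expositions in \cite{Fu,KSS15}). Both the maximum and the minimum of $\Haus E$ over $E \in \mathcal{G}_F$ exist, and they are given by the Assouad and lower dimensions of $F$ respectively.

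For the forward implication, I would assume that $\Lower F = \Assouad F$ and note that for every $E \in \mathcal{G}_F$ we have
\[
\Lower F = \min_{E' \in \mathcal{G}_F} \Haus E' \leq \Haus E \leq \max_{E' \in \mathcal{G}_F} \Haus E' = \Assouad F.
\]
Since the two outer quantities coincide by hypothesis, all microsets must share a common Hausdorff dimension equal to the common value. For the converse, if all $E \in \mathcal{G}_F$ have the same Hausdorff dimension, then in particular the minimum and maximum over $\mathcal{G}_F$ agree, so Theorem \ref{ThMain} and the Assouad analogue immediately yield $\Lower F = \Assouad F$.

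There is no substantial obstacle here once Theorem \ref{ThMain} is established; the entire content of the corollary is bookkeeping between the two dimension characterisations. The only minor point worth being careful about is to state clearly where the Assouad characterisation comes from (so the reader does not have to hunt for it), and to emphasise that the existence of both extrema, rather than just the equality of suprema and infima, is what makes the argument completely formal.
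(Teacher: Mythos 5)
Your argument is correct and matches exactly what the paper intends: the corollary follows immediately by sandwiching $\Haus E$ between the minimum and maximum over the gallery, which Theorem \ref{ThMain} and Theorem \ref{MicroAssouad} identify as $\Lower F$ and $\Assouad F$ respectively. The paper does not spell out a proof (it just says ``combining this result with the analogous one for the Assouad dimension''), and your bookkeeping is precisely that combination.
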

	
	Here $\Assouad$ stands for the Assouad dimension; again see Section \ref{prelim} for the definition.
	
	We know that the Hausdorff dimension of microsets attains both the lower and Assouad dimensions of a set. The question then becomes which other numbers \emph{can} be attained by the dimensions of microsets and which numbers are \emph{guaranteed} to be attained. The next result shows that the collection of Hausdorff dimensions obtained can be rather complicated and rich.
	
	\begin{thm} \label{Thmain2}
		If $\Delta \subseteq [0,d]$ is an $\mathcal{F}_\sigma$ set which contains its infimum and supremum, then there exists a compact set $F \subseteq [0,1]^d$ such that
		\[
		\{ \hd E : E \in \mathcal{G}_F \} = \Delta.
		\]
	\end{thm}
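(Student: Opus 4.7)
The plan is to construct $F$ in two stages: first for compact $\Delta$, then reducing the general $\mathcal{F}_\sigma$ case to this. The main building blocks are as follows: for each $s \in [0, d]$ I would fix a compact Ahlfors $s$-regular set $K_s \subseteq [0, 1]^d$ depending continuously on $s$ in the Hausdorff metric, with regularity constants uniform in $s$; a concrete such family is provided by self-similar Cantor-type sets whose contraction parameters vary continuously. The crucial features are that every element of $\mathcal{G}_{K_s}$ has Hausdorff dimension exactly $s$, and that any Hausdorff limit of a sequence $K_{s_j}$ with $s_j \to s$ is Ahlfors $s$-regular, hence has dimension exactly $s$.

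For compact $\Delta$, enumerate a countable dense subset $\{s_j\}_{j \geq 1}$ containing $\alpha := \min \Delta$ and $\beta := \max \Delta$. Choose well-separated points $x_j \in K_\alpha$ together with super-exponentially decaying scales $\rho_j$ (for instance $\rho_{j+1} \leq \rho_j^3$), and set
\[
F := K_\alpha \cup \bigcup_{j \geq 1} (\rho_j K_{s_j} + x_j).
\]
Zooming at $x_j$ at scale $\rho_j$ recovers a microset of Hausdorff dimension $\max(s_j, \alpha) = s_j$, placing every $s_j$ in the set of realised dimensions. For $s \in \Delta \setminus \{s_j\}_j$, pick $s_{j_k} \to s$ and extract a Hausdorff-convergent subsequence of the associated microsets; continuity of $s \mapsto K_s$ together with the preservation of Ahlfors regularity forces the limit to have dimension exactly $s$. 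Conversely, the super-exponential decay of $\rho_j$ guarantees that at any scale only one decoration is active, so every microset splits as a union of at most one $K_{s_j}$-microset and a $K_\alpha$-microset, whose dimension therefore lies in $\overline{\{\alpha\} \cup \{s_j\}_{j \geq 1}} = \Delta$.

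For $\mathcal{F}_\sigma$ $\Delta = \bigcup_{n \geq 1} \Delta_n$ with $\Delta_n$ compact, nondecreasing, and $\alpha, \beta \in \Delta_1$, I apply the compact case to obtain compact sets $F_n \subseteq [0, 1]^d$ with $\{\hd E : E \in \mathcal{G}_{F_n}\} = \Delta_n$, and assemble
\[
F := F_1 \cup \bigcup_{n \geq 2} (\rho_n F_n + y_n)
\]
with $\rho_n$ decaying super-exponentially and each $y_n \in F_1$ chosen so that the local structure of $F_1$ at $y_n$ on scale $\rho_n$ has dimension $\alpha$; such points exist by Theorem~\ref{ThMain}. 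The gallery of $F$ then contains the gallery of each $F_n$, yielding $\bigcup_n \Delta_n = \Delta$ in the set of realised dimensions. The principal obstacle throughout is to ensure that no extraneous dimensions appear from diagonal limits taken across different decorations, since $\hd$ behaves erratically under Hausdorff limits. This is handled by separating the building blocks enough that at every scale and location at most one decoration contributes non-trivially, by the fact that the maximum of two elements of $\Delta$ is just the larger of the two and hence still lies in $\Delta$, and by exploiting the rigidity of Ahlfors regularity within the chosen continuous family $s \mapsto K_s$ to guarantee that any further limit of microsets with dimensions in $\Delta$ stays inside $\Delta$.
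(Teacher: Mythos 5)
Your compact-case construction is broadly in the same spirit as the paper's (a countable dense collection of self-similar / Ahlfors-regular building blocks of prescribed dimensions, glued together with super-exponentially separated scales), though the paper places the building blocks inside the Cantor structure of a base set $Q_0$ of dimension $\inf\Delta$ rather than decorating a base set at isolated points. The compact case would also need care about whether a \emph{single} family $s\mapsto K_s$ can be both continuous on all of $[0,d]$ and Ahlfors regular with uniform constants near $s=0$, but that is a fixable technicality.

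The genuine gap is in the second stage, the reduction from general $\mathcal{F}_\sigma$ $\Delta$ to compact $\Delta_n$. Writing $F = F_1 \cup \bigcup_{n\ge 2}(\rho_n F_n + y_n)$, the gallery $\mathcal{G}_F$ contains not only microsets of the individual $F_n$ but also \emph{diagonal} microsets: Hausdorff limits of minisets drawn from $\rho_n F_n + y_n$ along a sequence $n\to\infty$. A microset of $F_n$ has dimension some $s_n\in\Delta_n\subseteq\Delta$, but $\Delta$ is not closed, so a limit $s_n\to t$ need only satisfy $t\in\overline{\Delta}$. Your appeal to ``rigidity of Ahlfors regularity'' guarantees (at best) that the diagonal limit microset has dimension $t$; it does \emph{not} force $t\in\Delta$. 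So the construction as written can produce microsets whose Hausdorff dimension lies in $\overline{\Delta}\setminus\Delta$, and nothing in the argument excludes this. This is precisely the obstruction the paper's construction is designed around: condition \eqref{1-over-n} forces $d_{\mathcal H}(K(s,n),Q_\infty)\le \sqrt d/n$ for \emph{every} $s$, so that any diagonal sequence $K(s_i,n_i)$ with $n_i\to\infty$ collapses to the single fixed set $Q_\infty$ of dimension $\sup\Delta\in\Delta$. That is what makes Lemma~\ref{closed!} true and closes off the escape to $\overline{\Delta}\setminus\Delta$. Your family $s\mapsto K_s$ has no analogue of this collapse-at-infinity, and without it the $\mathcal{F}_\sigma$ case does not follow from the compact one.
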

	
	The gallery of a set is closed under the Hausdorff metric. However, the above theorem says that the set of dimensions of microsets in the gallery need not be.  We have not been able to construct compact sets $F$ for which $\{ \hd E : E \in \mathcal{G}_F \} $ is not $\mathcal{F}_\sigma$ and wonder if this is always the case.
	
	Note that the   Hausdorff, packing, upper and lower box dimensions of the original set need not appear as Hausdorff dimensions of sets in the gallery if one insists on the microsets having unbounded scaling sequence.  This is a natural assumption which guarantees that microsets genuinely reflect infinitesimal structure, that is, one genuinely zooms in to generate them.  This is in stark contrast to the Assouad and lower dimensions which we have seen always appear. See Section \ref{dimappear} for a full discussion of this observation.
	
	In the opposite direction, there exist well studied sets whose microsets observe all possible dimensions between the lower and Assouad dimensions. For instance, Bedford-McMullen carpets $F$ which do not have uniform fibers have the property that $\Lower F< \Assouad F$ and  $\{ \hd E : E \in \mathcal{G}_F \} = [\Lower F, \Assouad F]$.  This can be seen by adapting the arguments in \cite{mackay, F} which construct extremal microsets to such carpets.  The extremal microsets are of the form $\pi F \times C$ where $\pi F$ is the projection of $F$ onto the first coordinate and $C$ is a self-similar set corresponding to the minimal or maximal column.  To obtain intermediate dimensions, one may construct microsets of the form $\pi F \times C_p$ where $C_p$ is a `random Cantor set', where the minimal column is chosen with probability $(1-p)$ and the maximal column with probability $p$. Varying $p \in (0,1)$ yields microsets with all possible dimensions. We do not pursue the details.  Alternatively the construction of Chen and Rossi \cite{CR} yields a set $F\subseteq [0,1]^d$ such that $\{ \hd E : E \in \mathcal{G}_F \} = [0,d]$.  
	
	In addition to the dimension results above, we also have the following topological result which can be naturally viewed as a dual version of \cite[Theorem 2.4]{FY1}, which says that any set of full Assouad dimension has the unit cube as a microset.
	
	\begin{thm}\label{Sing}
		For any compact set $F \subset \mathbb{R}^d$, there is a singleton in $\mathcal{G}_F$ if and only if $\Lower F = 0$.
	\end{thm}

	\section{Preliminaries}\label{prelim}
	\subsection{Dimensions}
	Let $N_r(F)$ be the smallest number of cubes of side length $r>0$ needed to cover the compact set $F \subset \mathbb{R}^d$. The \emph{upper} and \emph{lower box dimensions} of $F$ are
	\[
	\uboxd F=\limsup_{r\to 0} \frac{\log N_r(F)}{-\log r}
	\]
	and
	\[
	\lboxd F=\liminf_{r\to 0}\frac{\log N_r(F)}{-\log r},
	\]
	respectively. When these two values coincide we simply talk about the \emph{box dimension} of $F$, denoted by $\boxd F$.
	
	Let $F\subset \mathbb{R}^d$ be a compact set and $s$ a non-negative real. For all $\delta>0$ we define
	\[
	\mathcal{H}^s_\delta(F)=\inf\left\{\sum_{i=1}^{\infty}\mathrm{diam} (U_i)^s: F \subset \bigcup_i U_i \text{ and } \mathrm{diam}(U_i)<\delta\right\}.
	\]
	The $s$-dimensional Hausdorff measure of $F$ is
	\[
	\mathcal{H}^s(F)=\lim_{\delta\to 0} \mathcal{H}^s_{\delta}(F)
	\]
	and the \emph{Hausdorff dimension} of $F$ is
	\[
	\Haus F=\inf\{s\geq 0:\mathcal{H}^s(F)=0\}=\sup\{s\geq 0: \mathcal{H}^s(F)=\infty          \}.
	\]
	For a more thorough treatment of the box and Hausdorff dimensions, see \cite[Chapters 2 and 3]{Fa} and \cite[Chapters 4 and 5]{Ma1}. 
	
	Finally we define the \emph{Assouad} and \emph{lower dimensions} of $F$ by
	\begin{align*}
	\Assouad F = \inf \Bigg\{ s \ge 0 \, \, \colon \, (\exists \, C >0)\, (\forall & R>0)\,  (\forall r \in (0,R))\, (\forall x \in F) \\ 
	&N_r(B(x,R) \cap F) \le C \left( \frac{R}{r}\right)^s \Bigg\}
	\end{align*}
	and
	\begin{align*}
	\Lower F = \sup \Bigg\{ s \ge 0 \, \, \colon \, (\exists \, C >0)\, (\forall &\,  0<R<1)\,  (\forall r \in (0,R))\, (\forall x \in F) \\ 
	&N_r(B(x,R) \cap F) \geq C \left( \frac{R}{r}\right)^s \Bigg\},
	\end{align*}
	where $B(x,r)$ is the closed ball of centre $x$ and radius $r$. For basic properties of these dimensions, see \cite{F}.
	
	
	The main property we will use is that,
	\[
	\Lower F\leq \Haus F\leq \LBox F\leq \UBox F\leq \Assouad F
	\]
	for all compact $F\subset\mathbb{R}^d$.

	\subsection{Microsets and galleries}\label{MG}
	
	We now introduce the notion of microsets and galleries following \cite{Fu}. We start by defining the Hausdorff distance between two compact sets $A,B \subset \mathbb{R}^d$ by
	\[
	d_{\mathcal{H}}(A,B)=\inf\{\delta>0: A\subset B_{\delta} \text{ and } B\subset A_{\delta}\},
	\]
	where $E_\delta$ is the closed $\delta$-neighbourhood of a compact set $E$. 
	
	Let $X=[0,1]^d$ for some $d\in \mathbb{N}$. Then $(\mathcal{K}(X),d_\mathcal{H})$, the space of compact subsets of $X$, is a compact metric space.
	
	\begin{defn}
		We call $D\in \mathcal{K}(X)$ a \emph{miniset} of $F\in\mathcal{K}(\mathbb{R}^d)$ if $D = \left(\lambda F + t\right)\cap X$ for some scaling coefficient $\lambda \ge 1$ and a translation vector $t\in \mathbb{R}^d$. Maps of the form $T(x) =  \lambda x + t$ will be called homotheties.
		A set $E \in \mathcal{K}(X)$ is called a \emph{microset} if it is a limit of a sequence $(D_n)_{n \in \mathbb{N}}$ of minisets under the Hausdorff metric. The sequence $(\lambda_n)_{n \in \mathbb{N}}$, where each $\lambda_n$ is a scaling coefficient of the miniset $D_n$, is called the \emph{scaling sequence} of the microset $E$.
	\end{defn}
	
	If a set is regular enough, for instance a self-similar set of positive dimension, then one could expect all microsets to be of the same dimension without appealing to Corollary \ref{maincor}. However, it is easy to find microsets which are just singletons. Simply consider the middle third Cantor set $\mathcal{C}$, then $(4\mathcal{C} - 4/3) \cap [0,1]= \{0\}$ is a miniset and hence a microset. This example can be easily modified such that none of the defining minisets contain singletons but the microsets do. Thus it is natural to discard all microsets which only contain points on the boundary of $X$. This will be reflected in the next definition which strays slightly from the formulation in \cite{Fu}.
	
	\begin{defn} \label{def:gallery}
		Let $F$ be a compact subset of $\mathbb{R}^d$. We consider only microsets which intersect the interior of $X$. Then the collection of all such microsets of $F$ is called the gallery of $F$, denoted by $\mathcal{G}_F$.
	\end{defn}
	
	Due to \cite{MT} we know that for compact subsets $F\subset \mathbb{R}^d$,
	\begin{equation} \label{eq:assouad-upper-micro}
	\Assouad F \ge \sup_{E\in\mathcal{G}_F} \Assouad E.
	\end{equation}
	The lower dimension case was considered in \cite[Proposition 7.7]{F} where the following proposition was obtained under some extra assumption but with the infimum taken over lower dimensions of microsets. We give a short proof to show that the extra assumption is not needed when one takes the infimum over Hausdorff dimensions.
	
	\begin{prop} \label{prop:lower-lower-bound}
		If $F \subseteq [0,1]^d$ is a compact set, then
		\[
		\Lower F \le \inf_{E\in\mathcal{G}_F} \Haus E.
		\]     
	\end{prop}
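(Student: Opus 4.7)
The plan is to show $\Haus E \ge s$ for every $E \in \mathcal{G}_F$ and every $s < \Lower F$; taking $s \nearrow \Lower F$ and then the infimum over $E$ yields the proposition.

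My first step would be to transfer the lower dimensional covering bound from $F$ to a neighbourhood of an interior point of $E$. Pick $y \in E \cap \operatorname{int}(X)$ with $B(y, 2\rho) \subset X$, and write the defining minisets as $D_n = (\lambda_n F + t_n) \cap X \to E$. For any $y' \in E \cap B(y, \rho/2)$, Hausdorff convergence gives $y_n' \in D_n$ with $y_n' \to y'$; write $y_n' = \lambda_n z_n' + t_n$ with $z_n' \in F$. Once $n$ is large enough that $B(y_n', \rho) \subset X$ and $\rho/\lambda_n < 1$, the miniset agrees locally with an affine copy of $F$:
\[
B(y_n', R) \cap D_n = \lambda_n\bigl(B(z_n', R/\lambda_n) \cap F\bigr) + t_n, \qquad 0 < R \le \rho.
\]
Applying the definition of $\Lower F$ at the point $z_n'$ with constant $C$ yields
\[
N_r(B(y_n', R) \cap D_n) = N_{r/\lambda_n}(B(z_n', R/\lambda_n) \cap F) \ge C(R/r)^s.
\]
A Hausdorff-metric $\varepsilon$-fattening then transfers this to $E$: for every $y' \in E \cap B(y, \rho/2)$ and $0 < r < R < \rho$,
\[
N_r(B(y', R) \cap E) \ge C(R/r)^s. \qquad (\dagger)
\]

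Using $(\dagger)$ I would then build a Moran-type Cantor subset $K \subseteq E \cap B(y, \rho/2)$ of Hausdorff dimension arbitrarily close to $s$. Fix $\alpha \in (0,1)$ and $r_0 \in (0, \rho/8)$, and set $r_k = r_0 \alpha^k$. Starting from $x_\emptyset = y$, at each stage $k$ and each node $x_w$ of length $k$, apply $(\dagger)$ at $x_w$ with $R = r_k/3$ and $r = r_{k+1}$, and extract, via the standard comparison between covering and packing, at least $N \ge C' \alpha^{-s}$ points $x_{wj} \in B(x_w, r_k/3) \cap E$ that are pairwise $r_{k+1}$-separated; declare these the children of $x_w$, each carrying the closed ball $\overline{B}(x_{wj}, r_{k+1}/3)$. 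These child balls are disjoint, and stay inside $B(y, \rho/2)$ thanks to the choice of $r_0$, so the construction continues indefinitely. The resulting Moran set $K = \bigcap_k \bigcup_{|w| = k} \overline{B}(x_w, r_k/3) \subseteq E$ has Hausdorff dimension at least
\[
\frac{\log(C' \alpha^{-s})}{\log(\alpha^{-1})} = s + \frac{\log C'}{\log(\alpha^{-1})},
\]
which tends to $s$ as $\alpha \to 0$; hence $\Haus E \ge \Haus K \ge s$.

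The main technical point to be careful with is the $\varepsilon$-fattening producing $(\dagger)$: Hausdorff-metric convergence only gives one-sided semicontinuity of covering numbers, so one must apply it in the correct direction. Concretely, for $n$ large with $|y_n' - y'| < \varepsilon$ and $D_n \subseteq E_\varepsilon$, any $r$-cover of $B(y', R + 2\varepsilon) \cap E$ widens to an $(r + 2\varepsilon)$-cover of $B(y_n', R) \cap D_n$, so
\[
N_r(B(y', R + 2\varepsilon) \cap E) \ge N_{r + 2\varepsilon}(B(y_n', R) \cap D_n) \ge C\bigl(R/(r + 2\varepsilon)\bigr)^s;
\]
since the left-hand side does not depend on $n$, letting $\varepsilon \to 0$ for fixed $r < R < \rho$ delivers $(\dagger)$. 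Once $(\dagger)$ is in place the Cantor construction is routine, and the hypothesis $E \cap \operatorname{int}(X) \ne \emptyset$ enters exactly to guarantee that $B(y_n', \rho) \subseteq X$ for large $n$, so that the miniset is an honest affine image of $F$ on that ball rather than being truncated by $\partial X$.
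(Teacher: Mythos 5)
Your argument is essentially the paper's: transfer the lower-dimension covering estimate from $F$ to the minisets (using that each $D_n$ is locally an exact affine copy of $F$ once $B(y_n',\rho)\subset X$), push it through the Hausdorff limit to get a uniform bound $(\dagger)$ on $E$ near an interior point, and conclude $\Haus E \ge s$; the paper performs this last step by quoting $\Lower \le \Haus$ whereas you unpack it into an explicit Moran construction, but this is the same mechanism at a different level of detail. One small slip in the fattening: the inequality $N_r(B(y',R+2\varepsilon)\cap E)\ge C\bigl(R/(r+2\varepsilon)\bigr)^s$ does not yield $(\dagger)$ by letting $\varepsilon\to 0$, since the left-hand side only increases as $\varepsilon$ grows; instead apply the same estimate with $R-2\varepsilon$ in place of $R$ to obtain $N_r(B(y',R)\cap E)\ge C\bigl((R-2\varepsilon)/(r+2\varepsilon)\bigr)^s$ and then let $\varepsilon\to 0$.
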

	
	\begin{proof}
		Let $F\subseteq [0,1]^d$ be compact. We may assume $\Lower F> 0$ since otherwise there is nothing to prove. If $0<s< \Lower F$, then for any sequence of homotheties $T_k \colon \mathbb{R}^d \rightarrow \mathbb{R}^d$ there exists a constant $C >0$ such that for any $k\in \mathbb{N}$, $y\in T_k(F)$ and $0<r<R< 1$ we have
		\[
		N_r(B(y,R)\cap T_k(F) \cap [0,1]^d) \ge C\left( \frac{R}{r}\right)^s.
		\]
		If this was not true, then the lower dimension of $F$ would be strictly less than $s$, a contradiction. Note that $T_k(F) \cap [0,1]^d$ is a miniset of $F$.
		
		Let $E\in \mathcal{G}_F$ and recall from Definition \ref{def:gallery} that then $E \cap (0,1)^d \neq \emptyset$. Now choose $T_k$ such that $E$ is the limit of $T_k(F) \cap [0,1]^d$. Note that if $E$ has an isolated point on the boundary of $[0,1]^d$, then $\Lower E = 0 < \Lower F$. To show that $\Lower F \le \Lower E \cap (0,1)^d$ let us fix $0<r<R<1$ and $x \in E \cap (0,1)^d$. Choose $k\in \mathbb{N}$ so that $d_{\mathcal{H}}(T_k(F) \cap [0,1]^d, E) \le r/2$. Then there is $y \in T_k(F) \cap [0,1]^d$ such that $B(y,R/2) \subset B(x,R)$ and for every $r$-cover of $B(x,R) \cap E \cap (0,1)^d$ there is a $2r$-cover of $B(y,R/2) \cap T_k(F) \cap [0,1]^d$ having at most the same cardinality. Thus
		\begin{align*}
		  N_r(B(x,R)\cap E \cap (0,1)^d) &\ge N_{2r}(B(y,R/2)\cap T_k(F) \cap [0,1]^d) \\ &\ge C4^{-s} \left(\frac{R}{r} \right)^s
		\end{align*}
		and so $\Lower E \cap (0,1)^d \ge s$, yielding
		\[
		\Lower F \le \Lower E \cap (0,1)^d \le \Haus E
		\]
		as desired.
	\end{proof}

	We are interested in whether the above inequalities are actually equalities and if the supremum and infimum can be attained. For the Assouad dimension, we have the following result.
	
	\begin{thm} \label{MicroAssouad}
		If $F \subset \mathbb{R}^d$ is a compact set, then
		\[
		\Assouad F=\max_{E\in\mathcal{G}_F} \Haus E=\max_{E\in\mathcal{G}_F} \Assouad E.
		\]
	\end{thm}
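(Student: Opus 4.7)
The plan is to split Theorem \ref{MicroAssouad} into an easy upper bound and a constructive lower bound, upgrading the resulting suprema to maxima at the end. The chain
\[
\sup_{E \in \mathcal{G}_F} \Haus E \le \sup_{E \in \mathcal{G}_F} \Assouad E \le \Assouad F
\]
is immediate from the general comparison $\Haus \le \Assouad$ together with \eqref{eq:assouad-upper-micro}. Hence the content of the theorem is to produce, for every $s < \Assouad F$, a microset $E \in \mathcal{G}_F$ with $\Haus E \ge s$, and then a single microset attaining the value $\Assouad F$.

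The construction of such a microset proceeds by zooming in on the densest parts of $F$. Fix $s < \Assouad F$. By the definition of the Assouad dimension, for each $n \in \mathbb{N}$ one can choose $x_n \in F$ and $0 < r_n < R_n$ together with an $r_n$-separated subset of $B(x_n, R_n) \cap F$ of cardinality at least $n(R_n/r_n)^s$. Compose with a homothety $T_n$ of factor $1/(2R_n)$ that pins $x_n$ to the centre of $X = [0,1]^d$ (we may assume $R_n \le 1/2$, so that the scaling factor is $\ge 1$), and set $D_n = T_n(F) \cap X$. By compactness of $(\mathcal{K}(X), d_\mathcal{H})$ a subsequence converges to some $E \in \mathcal{K}(X)$; since each $D_n$ contains the centre of $X$, the limit $E$ meets $(0,1)^d$ and hence $E \in \mathcal{G}_F$.

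The main obstacle is the Hausdorff lower bound $\Haus E \ge s$: the single-scale separation count in $D_n$ only delivers a box-counting estimate in the limit. To upgrade, one runs a multi-scale refinement of the above construction, iterating the Assouad defining property inside successively smaller sub-balls of $B(x_n, R_n) \cap F$. This produces, after applying $T_n$, a tree-like family of dyadic sub-cubes of $X$ whose $j$-th level contains at least $\lceil c\, 2^{js}\rceil$ cubes of side $2^{-j}$, for $j$ up to some $k(n) \to \infty$. The uniform Bernoulli-type probability measure on the associated Cantor set satisfies an $s$-Frostman condition. Passing simultaneously to Hausdorff limits of the cube trees and weak-star limits of the measures yields a closed subset of $E$ carrying an $s$-Frostman measure, whence $\Haus E \ge s$ by the mass distribution principle.

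Finally, choosing $s_m \uparrow \Assouad F$ with corresponding microsets $E_{s_m} \in \mathcal{G}_F$ satisfying $\Haus E_{s_m} \ge s_m$, one applies Hausdorff compactness once more to extract a limit microset $E^* \in \mathcal{G}_F$, together with a weak-star limit of the associated Frostman measures; the resulting measure is supported on $E^*$ and satisfies an $\Assouad F$-Frostman condition, forcing $\Haus E^* \ge \Assouad F$. Combined with the upper bound this gives $\Haus E^* = \Assouad E^* = \Assouad F$, so both suprema in the statement are attained as maxima and coincide with $\Assouad F$.
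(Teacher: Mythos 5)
The paper proves Theorem \ref{MicroAssouad} by citing known results (the combination of \cite[Theorem 5.1]{Fu} with \cite[Proposition 3.13]{KR}, or directly \cite[Proposition 5.7]{KOR}) together with \eqref{eq:assouad-upper-micro}; your proposal instead sketches a direct proof from scratch, which is in the spirit of the underlying arguments in those references. The upper-bound chain, the zooming construction, the observation that a microset containing the centre of $X$ lies in $\mathcal{G}_F$, and the closing diagonalization over $s_m \uparrow \Assouad F$ are all legitimate ingredients.

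The gap is in the multi-scale refinement step, and it is the crux of the whole theorem. You assert that a tree of dyadic sub-cubes with at least $\lceil c\,2^{js}\rceil$ cubes at every level $j \le k(n)$ can be produced by ``iterating the Assouad defining property inside successively smaller sub-balls of $B(x_n,R_n)\cap F$.'' This does not work as stated: the Assouad condition is existential --- it provides some ball, somewhere in $F$, that is dense at some pair of scales --- and gives no control on sub-balls of the $B(x_n,R_n)$ you have already selected. A naive pigeonhole inside $B(x_n,R_n)$ degrades the density by a factor $2^{-(d-s)}$ per level, which is fatal after many levels. What is actually needed is a non-trivial combinatorial pruning lemma on the tree spanned by the $r_n$-separated set: delete vertices with too few descendants and show that the surviving subtree retains a positive proportion of leaves and, crucially, has a uniform local branching bound at every vertex. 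Your stated conclusion (a lower bound on the number of cubes at each level) is, moreover, not by itself sufficient for the Bernoulli measure to be $s$-Frostman; a skewed tree can satisfy that leaf count while the mass distribution fails the Frostman condition badly, so the local branching control is indispensable. Finally, the closing weak-$*$ limit requires the Frostman constants to be uniformly bounded as $s_m \uparrow \Assouad F$; this is true, but it must be read off from the pruning lemma rather than assumed. As written, the proposal identifies the right obstacle but does not overcome it.
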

	
	\begin{proof}
		Recalling \eqref{eq:assouad-upper-micro}, the statement follows from \cite[Theorem 5.1]{Fu} and \cite[Proposition 3.13]{KR}, or, alternatively, directly from \cite[Proposition 5.7]{KOR}.
	\end{proof}
	
	Thus, together with Theorem \ref{ThMain}, we obtain the following equivalent definitions of the Assouad and lower dimensions for compact subsets of Euclidean spaces
	\[
	\Assouad F=\max_{E\in\mathcal{G}_F} \Haus E
	\]
	and
	\[
	\Lower F=\min_{E\in\mathcal{G}_F} \Haus E.
	\]
	We remark that in the literature weak tangents are often used in place of microsets. They differ from microsets by allowing rotations in the magnifications, and sometimes they are not restricted to the unit cube. 

	\section{Global and local size of trees}\label{Tree}
	Before proving Theorem \ref{ThMain} we need some combinatorial results on the structure of trees. Here we only talk about binary trees (each vertex has at most two children) but all definitions and results can be easily generalized to any $k$-ary trees with $k\geq 3$. Notation introduced in this section will only be used in this section and the next one.
	
	We adopt standard graph theoretic notation and use $V(T)$ and $E(T)$ for vertices and edges of $T$. We consider trees as directed graphs, with edges going down to descendants. We define the degree of a vertex to be the sum of indegrees and outdegrees of a vertex. A leaf of $T$ is an element in $V(T)$ whose degree is $1$, except when $T$ consists of just the root vertex (the vertex which is not a descendant of any other vertices) then the only leaf is the root of degree 0. We denote the set of leaves of $T$ as $L(T)$, so in particular $L(T)\subset V(T)$. We say that two trees are equal if they are isomorphic in terms of directed graph.
	
	Given a binary tree $T$, the height $h(T)$ is the length of the longest path starting at the root vertex. When $h(T)<\infty$ we say that $T$ is finite. For a vertex $a\in V(T)$, $h(a)$ is the length of the unique path from the root to $a$. We will often use the term `level $n$' to mean all vertices of height $n$. Given $a\in V(T)$, we use $T(a,n)$ to denote the largest subtree of $T$ with root $a$ of height at most $n$, formally the vertex set of $T(a,n)$ is defined to be as follows
	\[
	V(T(a,n))=\{b\in V(T): \text{there is a path in $T$ from $a$ to $b$ of length at most $n$}\}.
	\]
	The edge set of $T(a,n)$ is defined to be as follows
	\[
	E(T(a,n))=\{(b_1,b_2)\in E(T):b_1\in V(T(a,n)), b_2\in V(T(a,n)) \}.
	\]
	In other words, $T(a,n)$ is the spanned subgraph of $T$ with vertices $V(T(a,n))$. 
	
	For any binary tree $T$ we use $\#T$ to denote the number of leaves and $\#_nT$ for the number of vertices of height $n$. A binary tree $T$ is tidy if $h(a)=h(T)$ for all leaves $a\in V(T)$. For example, a full tree (all non-leaf vertices have two children) is tidy but not vice versa. If $T$ is tidy, then for any $a\in V(T)$ and any integer $n$ such that $h(a)+n\leq h(T)$, it is clear that $T(a,n)$ is tidy. Note that if $h(a)+n>h(T),$ then $T(a,n)$ is not defined as a subtree of $T$.
	
	\begin{defn}\label{loc}
		Let $T$ be a tidy binary tree, $s>0$ and $m\in \mathbb{N}$. We call $T$ locally $(s,m)$-large (or small) if for all $a\in V(T)$ with $h(a)+m\leq h(T)$, there exists $1\leq n\leq m$ such that 
		\[
		\#T(a,n)\geq 2^{sn}
		\quad\left( \text{or } \#T(a,n)\leq 2^{sn}  \right).
		\]
	\end{defn}

Note that when $T$ is infinite then this must simply hold for all $a\in V(T)$.  Roughly speaking, a  locally $(s,m)$-large (or small) tree is one such that below every vertex there is a tree with height less than $m$ which is big (or small) quantified by $s$.  
	
	\begin{defn}\label{glo}
		Let $T$ be a tidy binary tree, $s>0$ and $C>0$. We call $T$ globally $(s,C)$-large (or small) if for all $n\in [1,h(T)]$ 
		\[
		\#_n T\geq C2^{sn}
		\quad\left( \text{or } \#_n T\leq C2^{sn}\right).
		\]
	\end{defn}
	Again note that if $T$ is infinite then this must hold for all $n\in [1,\infty)$. Roughly speaking, a  globally  $(s,m)$-large (or small) tree is one which is large (or small) at every level quantified by $s$. We state and prove our regularity lemma in terms of largeness. Note that it is also possible to obtain an analogous lemma with largeness being replaced by smallness. The proof is similar and we omit the details.
	
	\begin{lma}\label{reg}
		Let $T$ be a tidy locally $(s,m)$-large tree with height larger than $m$, then it is globally $(s,2^{-sm})$-large as well.
	\end{lma}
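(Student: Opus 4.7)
The plan is to reduce the lemma to the following key claim: any tidy locally $(s,m)$-large tree $T$ satisfies $\#T \ge 2^{s(h(T)-m)}$, where $\#T$ counts the leaves (all lying at level $h(T)$ by tidyness). Given this, fix $n \in [1, h(T)]$: for $n \le m$ the desired inequality $\#_n T \ge 2^{-sm}2^{sn}$ has right-hand side at most $1$ and holds trivially (since $T$ is tidy of height $>m\ge n$), while for $n > m$ one prunes $T$ to the subtree $T^{(n)}$ consisting of all vertices at levels $\le n$ that are ancestors of some level-$n$ vertex, checks that $T^{(n)}$ is itself tidy and locally $(s,m)$-large with $h(T^{(n)}) = n$ and $\#T^{(n)} = \#_n T$, and applies the key claim. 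A natural first attempt to prove the key claim by iterating the largeness condition level by level runs into trouble because the witness $n_a \in [1,m]$ in Definition \ref{loc} depends on the vertex $a$, so a pigeonhole at each step loses a factor of $m$ and these losses accumulate; the remedy is to apply local largeness only at the root and then recurse on all subtrees in parallel.

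Before running the induction I would record two features of a tidy tree $T$ that are used repeatedly. First, every vertex at level $\ell < h(T)$ must have at least one child (else it would be a leaf of the wrong height), so the subtree $T_v$ rooted at any such $v$ is itself tidy of height $h(T) - \ell$. Second, $T_v$ inherits local $(s,m)$-largeness from $T$: if $b \in V(T_v)$ satisfies $h_{T_v}(b) + m \le h(T_v)$, then $h_T(b) + m \le h(T)$, and the witness $n_b$ supplied by local largeness in $T$ still works in $T_v$ because all descendants of $b$ in $T$ already lie in $T_v$.

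With these tools in hand, the induction on $h(T)$ runs cleanly. If $h(T) \le m$ then $\#T \ge 1 \ge 2^{s(h(T)-m)}$ and there is nothing to prove. If $h(T) > m$, apply Definition \ref{loc} at the root to obtain $n_1 \in [1,m]$ with $\#_{n_1} T \ge 2^{sn_1}$. For each of these vertices $v$ at level $n_1$, the subtree $T_v$ is tidy, locally $(s,m)$-large, and has height $h(T) - n_1 < h(T)$, so the inductive hypothesis (or triviality when $h(T)-n_1 \le m$) yields $\#T_v \ge 2^{s(h(T)-n_1-m)}$. Summing over all $v$ at level $n_1$, and noting that the leaves of $T$ partition according to their unique ancestor at that level,
\[
\#T = \sum_{h(v)=n_1} \#T_v \ge \#_{n_1}T \cdot 2^{s(h(T)-n_1-m)} \ge 2^{sn_1}\cdot 2^{s(h(T)-n_1-m)} = 2^{s(h(T)-m)},
\]
which completes the proof. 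The main obstacle, as flagged above, is organising the recursion so that local largeness is invoked only once per subtree and so that one sums $\#T_v$ over the level-$n_1$ vertices rather than trying to iterate the argument vertex by vertex within each subtree.
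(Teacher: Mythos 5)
Your proof is correct, but it takes a genuinely different route from the paper's. You establish a clean key claim, namely that any tidy locally $(s,m)$-large tree $T$ has $\#T \ge 2^{s(h(T)-m)}$, by induction on height: invoke local largeness once at the root to spread over level $n_1 \in [1,m]$, then recurse on the subtrees $T_v$ rooted at level-$n_1$ vertices, using that tidyness and local largeness pass to these rooted subtrees. You then apply this claim to the truncation $T^{(n)}$ of $T$ at level $n$ (which, because $T$ is tidy, contains all vertices of $T$ at levels $\le n$, is itself tidy of height $n$, inherits local largeness, and satisfies $\#T^{(n)} = \#_n T$). The paper instead runs a greedy algorithm: starting from the root, repeatedly graft a locally large subtree of height at most $m$ onto each current leaf, stopping when the leaves are pushed up to within $m$ of level $N$, yielding a maximal witness subtree $T^N$; it then introduces the $s$-weight $W(a) = 2^{-sh(a)}$ and shows, by repeatedly collapsing bottom-level blocks, that the total leaf weight $W(T^N) \ge 1$, from which $\#T^N \ge 2^{s(N-m)}$ and hence $\#_N T \ge \#T^N$ follow. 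Both arguments extract the same bound $\#_n T \ge 2^{s(n-m)}$, but your inductive formulation dispenses with the weight bookkeeping and the greedy-maximality step, handling all the recursion in one stroke; the paper's version makes the ``build a witness subtree'' intuition explicit and its weight function is the kind of tool that recurs elsewhere in the literature, but for proving this particular lemma your route is shorter and easier to verify.
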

	\begin{proof}
		As $T$ is locally large in the above sense we can use the following algorithm to find large subtrees.
		
		\textbf{Step 1}: let $T_0$ be the tree whose vertex set contains only the root of $T$.
		
		\textbf{Step 2}: If $T_k$ is defined for an integer $k$, then $T_k$ has leaves. Take a leaf $a$ of $T_k$, then there is an integer $1\leq n\leq m$ such that
		\[
		\#T(a,n)\geq 2^{sn}.
		\]
		Then we join $T(a,n)$ to $T_k$ at $a$ and call the tree obtained $T_{k+1}$.
		
		We can repeat Step 2 for all leaves of $T_k$. Notice that the above algorithm is not deterministic as there are multiple choices of leaves in Step 2. However, the algorithm could easily be made deterministic by picking leaves ``from left to right''.
		
		Let $N$ be an integer larger than $m$ and not greater than the height of $T$. Let $T_N$ be the subtree of $T$ obtained by repeating Step 2 as many times as possible while keeping leaf height greater than $N$. Note that $T^N$ is maximal in the sense that we cannot enlarge $T^N$ by applying Step 2. Then it is clear that all the leaves of $T^N$ have height at least $N-m$ for otherwise the local largeness can help us enlarge $T^N$.

		We define the $s$-weight of $a\in V(T)$ to be $W(a) = 2^{-sh(a)}$ for $s\in [0,1]$. Note that the root always has $s$-weight 1. Consider the total $s$-weight of the leaves of $T^N$:
		\[
		W(T^N)=\sum_{a \in L(T^N)} 2^{-sh(a)}.
		\]
		We can group the leaves of $T^N$ according to their ancestors. Namely, for each leaf $a$ of $T^N$, there is a unique $b\in V(T^N)$ such that $a$ is a leaf of $T(b,n)$ for some $n\in [1,m]$ and $T(b,n)$ is a tree joined to the main tree in Step 2 of our algorithm. We shall denote $b=b(a)$ to imply this dependence. 
		
		We now compute the total $s$-weight of the leaves of $T^N$. First, we find the set $L$ of leaves with maximum height. This is possible because there are only finitely many leaves in $L(T^N)$. Then if $a\in L$ and $b=b(a)$ then $T(b,h(a)-h(b))$ is contained in $T^N$ and because $T$ is tidy, $L(T(b,h(a)-h(b)))\subset L$. As there are only finitely many leaves in $L$ we can find a finite collection $B$ of vertices $b\in V(T^N)$ and integers $\{n_b\}_{b\in B}$ such that $L$ is the disjoint union of sets $L(T(b,n_b)), b\in B$. Therefore we see that
		\[
		\sum_{a\in L} 2^{-sh(a)}=\sum_{b\in B}\sum_{a\in L(T(b,n_b))} 2^{-sh(a)}.
		\]
		As $T$ is locally $(s,m)$-large, we see that for each $b\in B$
		\[
		\sum_{a\in L(T(b,n_b))} 2^{-s(h(a))}\geq 2^{-s(h(b)+n_b)}2^{s(n_b)}=2^{-sh(b)}.
		\]
		This implies that
		\[
		\sum_{a\in L} 2^{-sh(a)}\geq \sum_{b\in B} 2^{-sh(b)}.
		\]
		Now we construct a subtree $T^N_1$ of $T^N$ replacing the subtrees $T(b,n_b), b\in B$ with single vertices $b\in B$. Then we have seen from above that
		\[
		\sum_{a\in L(T^N_1)} 2^{-sh(a)}\leq \sum_{a'\in L(T^N)} 2^{-sh(a')},
		\]
		because each $a\in L(T^N_1)$ is either in $L(T^N)$ or else it is in $B$.
		
		We can then perform the above procedure on the tree $T^N_1$ instead of $T^N$ and we obtain a subtree $T^N_2$ whose leaves have weight no greater than that of $T^N_1$. Moreover, the height of $T^N_2$ is strictly smaller than the height of $T^N_1$. This means that after performing the above procedure at most finitely many times we arrive at the tree with only one vertex, the root. This implies that
		\[
		W(T^N)\geq 1.
		\]
		
		As we just observed, the leaves of $T^N$ have height at least $N-m$ so their weights are at most $2^{-s(N-m)}$ so the number of the leaves is at least
		\[
		2^{s(N-m)}.
		\] 
		However, observe that $\#_N T\geq \# T^N$ because $T$ is tidy. Therefore we see that
		\[
		\#_N T \geq 2^{s(N-m)}.
		\]
		As $N$ is arbitrarily chosen, this is what we want.
	\end{proof}

	\section{Lower  dimension and microsets} \label{Microset}
	
	Returning to the Euclidean space, we now prove Theorem \ref{ThMain}. We shall show that there exists a microset $E \in \mathcal{G}_F$ such that
	\[
	\uboxd E \le \Lower F
	\]
	whenever $F$ is a compact subset of $[0,1]$. The result easily generalizes to higher dimensions and Theorem \ref{ThMain} then follows from Proposition \ref{prop:lower-lower-bound}. The main idea behind the proof is to represent subsets of the unit interval as dyadic trees and then use the previous regularity lemma to determine the covering number of a microset.
	
	For $n\in \mathbb{N}$ and $i\in \left\{0,1,\ldots, 2^n-1 \right\}$ we define the $i^\textrm{th}$ dyadic interval of height $n$ to be $D_n(i) = \left[\frac{i}{2^n}, \frac{i+1}{2^n}\right]$. This interval is then associated with the $i^\textrm{th}$ vertex of level $n$ in the full binary tree. We can then associate a subtree $T(F)$ of the full binary tree to a compact set $F\subseteq [0,1]$  by removing the $j^\textrm{th}$ vertex of level $k$ (as well as all of its descendants) if $D_k(j) \cap F = \emptyset$. Note that if $D_k(j) \cap F = \emptyset$ for some $k$ and $j$ then any smaller dyadic interval inside $D_k(j)$ must also not intersect $F$. So $T(F)$ is indeed a subtree of the full dyadic tree. Later on we will use subtrees to find microsets. In order to satisfy the condition that microsets intersect the interior of the reference set $X$, we need to modify $T(F)$ as follows. If there is some $n\in \mathbb{N}$ and $i\in \{1,\ldots,2^n-1\}$ such that $\frac{i}{2^n}\in F$ then we need to check whether $\left(\frac{i-1}{2^n}, \frac{i}{2^n}\right)\cap F = \emptyset$ or $\left(\frac{i}{2^n}, \frac{i+1}{2^n}\right)\cap F = \emptyset$. If both intersections are empty then without loss of generality we remove the vertex associated to $D_n(i)$ and keep the vertex associated to $D_n(i-1)$.  If both are non-empty then we keep both vertices.  Finally, and most importantly, if only one of the two intersections is empty then we remove the vertex associated with the dyadic interval forming the empty intersection. It is straightforward to check that $T(F)$ is a tidy, infinite, binary tree. 
	
	The inverse of the above procedure can be described as follows. Given a tidy, infinite binary tree $T$, we can associate a compact set $S(T)$ in a natural way by identifying vertices of $T$ as dyadic intervals. Given an infinite path $l$ in $T$ from the root, we can find uniquely a point $x$ contained in the intersection of all dyadic intervals corresponds to the vertices along $l$. In this way we see that for each compact subset $F$ of $[0,1]$, $S(T(F))=F$. We note that it is not true that $T(S(Tr))=Tr$ holds for all tidy, infinite binary tree $Tr.$

	Since we wish to compare microsets and trees, we must also have a suitable notion of convergence of trees. Let $T_i$ be a sequence of binary trees with roots denoted by $a_i$. We say that $T_i$ converges if there exists a sequence of tidy binary trees $\{K_n\}_{n\in\mathbb{N}}$ with the same root $a$ and height $n$ such that for all $n\in\mathbb{N}$ there exists an $I\in \mathbb{N}$ such that
	\[
	T_i(a_i,n) \text{ are equal to $K_n$ for all $i\ge I$}.
	\]
	The limit $\lim_{i\to\infty} T_i=T$ is defined to be the binary tree with root $a$ and $T(a,n)=K_n$ for all $n$. Notice that if the above holds then it is necessary that $K_{n_1}$ is a subtree of $K_{n_2}$ for integers $n_1\leq n_2$.
	
	For any sequence of binary trees with unbounded heights, there exists a convergent subsequence. To see this we note that if a tree has some number of vertices at some level then there are only finitely many configurations for the vertices on the next level. Therefore for any sequence of trees of unbounded height, there will always be at least one configuration of the first $n$ levels that repeats infinitely often for all $n\in \mathbb{N}$. 
	
	Let $T=T(F)$, then for any $a\in V(T)$ and integer $n$, the subtree $T(a,n)$ corresponds to a finite approximation of a miniset $E$ of $F$ by blowing a dyadic interval of length $2^{-h(a)}$ up to the unit interval. Given a convergent sequence $T(a_i,n_i)$ with $h(a_i)\to\infty$, we can find a convergent sequence of minisets $E_i=S(T(a_i,\infty))$ such that the binary tree associated with the limit $E_{\infty}$ is precisely $\lim_{i\to\infty} T(a_i,n_i)$. To see that $E_i$ indeed converges to $S(\lim_{i\to\infty} T(a_i,n_i))$ we need only to see that $n_i\to\infty$ and the Hausdorff metric between $E_i$ and $S(\lim_{i\to\infty} T(a_i,n_i))$ is bounded from above by $2^{-n_i}$. 
	
	Due to the construction of our tree such microsets will only intersect the boundary of $X$ if there is a genuine isolated point, in which case there exists a number of actual microsets in the gallery containing the isolated point. Thus, without loss of generality, we may assume that the microsets obtained do satisfy our extra condition (that they intersect the interior of $X$).
	
	\begin{lma}
		If $F \subseteq [0,1]$ is a compact set and $\varepsilon > 0$, then there is a microset $E \in \mathcal{G}_F$ such that $\UBox E \le \Lower F+\varepsilon$.
	\end{lma}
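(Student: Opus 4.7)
The strategy is to work in the dyadic tree $T = T(F)$ and apply Lemma~\ref{reg} in its contrapositive form. Fix $s$ with $\Lower F < s < \Lower F + \varepsilon$. We will construct a sequence of deep vertices $b_N \in V(T)$ whose subtrees satisfy $\#T(b_N, n) < 2^{sn}$ for every $n \leq N$; a convergent subsequence of the induced minisets will then produce the desired microset.

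The first step is to exploit the strict inequality $\Lower F < s$ to locate sparse patches. For every $N \in \mathbb{N}$ I would produce $a_N \in V(T)$ with $h(a_N) \geq N$ and an integer $m_N > N$ satisfying
\[
\#T(a_N, m_N) \leq 2^{s(m_N - N) - 1}.
\]
This is a dyadic translation of the failure of the $s$-lower-dimension inequality: the definition of $\Lower F$ yields, for every $C > 0$, a triple $(x, R, r)$ violating $N_r(B(x,R)\cap F) \geq C (R/r)^s$, and since $B(x,R)$ meets at most three dyadic cubes at scale comparable to $R$, pigeonholing produces the desired subtree. The subtle point is that we need $h(a_N) \to \infty$, so the failures must occur at arbitrarily small $R$. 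This follows from a short direct argument: if all failures were confined to $R \geq R_0 > 0$, then combining the uniform inequality for $R < R_0$ with the monotonicity $B(x,R_0) \subseteq B(x,R)$ for $R \geq R_0$ (and $R \leq 1$) would give the $s$-lower-dimension inequality for all $R \leq 1$ with a positive constant, contradicting $\Lower F < s$.

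The second step applies Lemma~\ref{reg} in contrapositive form to the tidy subtree $T(a_N, m_N)$. The displayed bound says $\#_{m_N} T(a_N, m_N) < 2^{-sN} \cdot 2^{s m_N}$, so $T(a_N, m_N)$ is not globally $(s, 2^{-sN})$-large, hence not locally $(s, N)$-large. This produces a vertex $b_N$ with $h(b_N) + N \leq h(a_N) + m_N$ such that
\[
\#T(b_N, n) < 2^{sn} \quad\text{for every } 1 \leq n \leq N.
\]
Since $b_N$ is a descendant of $a_N$, we have $h(b_N) \geq h(a_N) \geq N$, so $h(b_N) \to \infty$.

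Finally, by the tree-compactness observation stated after the convergence definition, pass to a subsequence along which $T(b_N, \infty)$ converges to a tree $T^*$ and the minisets $S(T(b_N, \infty))$ converge in the Hausdorff metric to a set $E^*$ whose associated tree is $T^*$. The condition $h(b_N) \to \infty$ ensures $E^* \in \mathcal{G}_F$ (the singleton/boundary caveat is handled by the tree construction described after Definition~\ref{def:gallery}). For each fixed $n$, taking $N \geq n$ large enough in the subsequence gives $\#_n T^* = \#T(b_N, n) < 2^{sn}$, and hence $\UBox E^* \leq s < \Lower F + \varepsilon$, as required. The main obstacle is the first step, specifically arranging the sparsity failure at arbitrarily small scales so that $h(a_N) \to \infty$; once this is in place, the remaining steps are essentially mechanical applications of Lemma~\ref{reg} and the tree-compactness setup.
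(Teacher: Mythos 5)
Your proof is correct and takes essentially the same route as the paper's: encode $F$ as a dyadic tree, exploit $\Lower F<s$ to find globally sparse subtrees, pass to locally sparse subtrees via Lemma~\ref{reg}, and extract a convergent subsequence of minisets. The only real difference is organizational: you build the constant $2^{-sN}$ directly into the choice of $a_N$ so the contrapositive of Lemma~\ref{reg} applies in one shot, whereas the paper first extracts windows with the looser bound $\#_{n_i}T_i\le 2^{n_i(s+\varepsilon)}$ and then runs a short contradiction argument (still invoking Lemma~\ref{reg}) to locate the locally sparse subtrees $T'_m$; this is a valid streamlining, not a different method.
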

	
	\begin{proof}
		We associate a binary tree $T(F)$ to $F$. Such a tree $T(F)$ is tidy by construction. Write $s = \Lower F$ and observe that, for any $\varepsilon>0$, we can find tidy subtrees $T_i=T(a_i,n_i)$ of $T(F)$ with height $n_i$ and $\#_{n_i} T_i\leq 2^{n_i(s+\varepsilon)}$. Moreover, we can assume that $n_i\to\infty$ as $i\to\infty$. We can also assume that $h(a_i)\to\infty$. Indeed, if it is not possible to find such a sequence of $a_i$, then there is an integer $N_0$ such that $\#_{N} T(a, N)\geq 2^{(s+\varepsilon)N}$ whenever $N\geq N_0$ and $h(a)\geq N_0$. This implies that $\Lower F\geq s+\varepsilon$ which is not possible.
		
		Let us show that, for any integer $m\geq 1$, we can find $T'_m=T(a'_m,n'_m)$ such that $a'_m\in\bigcup_{i\in\mathbb{N}} V(T_i)$ and
		\begin{equation} \label{eq:subtree}
		\#_n T'_m\leq 2^{(s+2\varepsilon)n}    
		\end{equation}
		for all $n\in [1,m]$. If we cannot find such a collection of subtrees, then all the trees $T_i$ are locally $(s+2\varepsilon,m)$-large for an integer $m$ which does not depends on $i$. Of course by dropping finitely many $T_i$ we may assume that $h(T_i)\geq m$ for all $i$. Then take a $T_i$ with large height (compared to $m$), then by Lemma \ref{reg} we see that $\#_{n_i}(T_i)\geq 2^{(s+2\varepsilon)(n_i-m)}$. So we see that for all $i$, with large enough $n_i$,
		\[
		2^{(s+2\varepsilon)(n_i-m)}\leq 2^{(s+\varepsilon)n_i}.
		\]
		Therefore we see that
		\[
		n_i\leq (s+2\varepsilon)\frac{m}{\varepsilon}.
		\]
		This is a contradiction as $n_i$ can be arbitrarily large, and hence such subtrees $T'_m$ exist.
		
		Let $T'_m$ be a sequence of subtrees of $T(F)$ satisfying \eqref{eq:subtree}. By taking a subsequence of $T'_m$ if necessary we assume that the sequence converges and this corresponds to a subset $E$ of $F$ with $T(E)=\lim_{m\to\infty} T'_m$. It is clear that for large $m$, $N_{2^{-n}} (E)=\#_n T(E)=\#_n T'_m\leq 2^{(s+2\varepsilon)n}$ for all integers $n$ and therefore the upper box dimension (and so the Hausdorff dimension) of $E$ is at most $s+2\varepsilon$. Now $\sup_{m}h(a'_m)$ must be infinite because $h(a_i)\to\infty$ and there are only finitely many vertices in each $T_i=T(a_i,n_i)$. So we see that $E$ is a microset.
	\end{proof}

	\begin{thm}\label{LowerHaus}
		If $F\subseteq [0,1]$ is a compact set, then there is a microset $E \in \mathcal{G}_F$ such that $\UBox E \le \Lower F$.
	\end{thm}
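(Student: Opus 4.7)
The plan is to apply the previous lemma with $\varepsilon=1/k$ for each $k\in\mathbb{N}$, obtaining microsets $E_k\in\mathcal{G}_F$ with $\UBox E_k\le s+1/k$ where $s=\Lower F$, and then to extract a limit belonging to $\mathcal{G}_F$ with $\UBox\le s$. I would carry out the limiting procedure at the level of the associated binary trees rather than at the level of sets directly, because upper box dimension is not lower semicontinuous under the Hausdorff metric and so a set-level limit of the $E_k$'s need not inherit the desired dimension bound.

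Concretely, for each $k$ let $T_k=T(E_k)$ be the tidy infinite binary tree associated with $E_k$ via the tree--set correspondence from Section \ref{Microset}. Inspection of the proof of the previous lemma shows that the tree produced there actually satisfies the uniform level bound $\#_n T_k\le 2^{(s+2/k)n}$ for every $n\in\mathbb{N}$, which is strictly stronger than the dimension estimate in the statement. I would then invoke the tree compactness principle recorded in the excerpt (there are only finitely many configurations at each fixed level) to extract a subsequence $T_{k_j}\to T$ in the sense of trees, and let $E$ be the compact set whose associated tree is $T$.

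For the dimension bound, fix $n\in\mathbb{N}$. By the definition of tree convergence, $\#_n T=\#_n T_{k_j}$ for every sufficiently large $j$, so
\[
\#_n T\le 2^{(s+2/k_j)n}.
\]
Letting $j\to\infty$ yields $\#_n T\le 2^{sn}$. Since $N_{2^{-n}}(E)\le \#_n T$ for every $n$, this gives $\UBox E\le s$ as required.

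The remaining task is to verify $E\in\mathcal{G}_F$: it must be a Hausdorff limit of minisets of $F$ with unbounded scaling sequence, and it must meet the interior of $[0,1]$. For the former, I would run a diagonal argument: each $E_k$ is itself a Hausdorff limit of minisets $D_{k,\ell}$ of $F$ with scaling $\lambda_{k,\ell}\to\infty$, so one picks $\ell(k)$ with $d_{\mathcal{H}}(D_{k,\ell(k)},E_k)\le 1/k$ and $\lambda_{k,\ell(k)}\ge k$; using that tree convergence implies Hausdorff convergence of the associated sets (noted in Section \ref{Microset}), the minisets $D_{k_j,\ell(k_j)}$ then converge to $E$ with scaling diverging. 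The interior condition is built into the tree correspondence, which was arranged precisely to rule out boundary-only singletons. The main obstacle, in my view, is this final bookkeeping step: ensuring that $E$ simultaneously satisfies all three requirements, which is why it is convenient to perform the dimension limit at the tree level while performing the microset limit at the set level and reconciling them through the tree--set correspondence.
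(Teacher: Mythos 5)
Your proof is correct and follows essentially the same route as the paper: both arguments apply the preceding lemma to obtain trees with uniform level-by-level bounds $\#_n T \leq 2^{(s+\varepsilon)n}$, pass to the tree level to extract a convergent (sub)sequence via the finiteness-of-configurations compactness principle, and read off $\UBox E \le s$ from the limiting tree before confirming membership in $\mathcal{G}_F$ by a diagonal pick of approximating minisets. The only organizational difference is that you first fix $\varepsilon = 1/k$ to produce the microsets $E_k$ and then take a tree limit of the $T(E_k)$, whereas the paper runs a single explicit diagonal selection directly on the doubly-indexed finite trees $T'_{m,2^{-j}}$; this is a cosmetic repackaging of the same idea, and your observation that the limit must be taken at the tree level (since $\UBox$ is not semicontinuous under $d_{\mathcal H}$) is exactly the point the paper is exploiting.
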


	\begin{proof}
		The above lemma says we can find a microset whose upper box dimension arbitrarily approximates the lower dimension of the original set. To obtain the equality desired we will use a Cantor diagonal argument to find a sequence of minisets which converge to a set satisfying the equality. From the previous lemma we know that, given $\varepsilon > 0$, there exists a subsequence of $T'_{m,\varepsilon}$ which converges to $T_{\varepsilon}.$ Let  $E_\varepsilon=S(T_\varepsilon).$ We have $\UBox E_\varepsilon \le s + \varepsilon$, where $s=\Lower F$. Recalling \eqref{eq:subtree}, we actually have the following stronger inequality, that for all $n\in [1,m]$,
		\[
		\#_n T'_{m,\varepsilon}\leq 2^{n(s+\varepsilon)}.
		\]
		
		We construct an algorithm which will give us the desired sequence.
		\\ \\
		\textbf{Step 1}: Let $j=1$ and $n_1 = 1$.
		\\ \\
		\textbf{Step 2}: Consider the subsequence of trees $T'_{m,2^{-j}}$ which converges to $T(E_{2^{-j}})$. Let $T_{n_j,j} = T'_{n_j+k,2^{-j}}$ where $k$ is the smallest integer (including zero) such that $T'_{n_j+k,2^{-j}}$ is in the convergent subsequence.
		\\ \\
		\textbf{Step 3}: Set $n_{j+1} = n_j + k + 1$ and $j=j+1$, then repeat the previous step.
		\\ \\
		We thus obtain a sequence of strictly increasing integers $\left\{n_j \right\}$ and a sequence of trees $\left\{T_{n_j,j} \right\}$. There is therefore a subsequence which converges to the tree $T(E)$ and $E$ is such that 
		\[
		N_{2^{-n}}(E) = \#_n T(E) \le 2^{(s+2^{-j})n}
		\]
		for all $j$ and $n$. Hence $\UBox E \leq s$ as required.
	\end{proof}

\section{Obtainable Hausdorff dimensions in a gallery}
In this section we prove Theorem \ref{Thmain2}. Let $\Delta \subseteq [0,d]$ be an arbitrary $\mathcal{F}_\sigma$ set which contains its infimum and supremum which we denote by $\inf \Delta$ and $\sup \Delta$, respectively.  Further assume that $\Delta\subseteq [0,d]$ is infinite, otherwise the proof is  much simpler and we leave the details to the reader.

We use self-similar sets with particular dimensions as building blocks for $F$.  We assume a working knowledge of self-similar sets and refer the unfamiliar reader to \cite[Chapter 9]{Fa}.  In what follows, we assume that the convex hull of each self-similar set we consider is $[0,1]^d$.  Let $Q_\infty \subseteq [0,1]^d$ be a self-similar set generated by equicontractive homotheties satisfying the open set condition which has Hausdorff dimension $\sup \Delta$. Equicontractivity means that all the maps have the same contraction ratio.

\begin{lma} \label{constructionlemma}
 For each $n \in \mathbb{N}$, there is a collection $\{ K(s,n) : s \in [0,\sup \Delta] \}$ of self-similar sets in $[0,1]^d$ satisfying the open set condition such that $\hd K(s,n)=s$ and
\begin{equation} \label{1-over-n}
 d_\mathcal{H}(K(s,n), Q_\infty) \leq \sqrt{d}/n
\end{equation}
and, for a fixed $n$,
\begin{equation} \label{ensureclosed}
d_\mathcal{H}(K(s,n), K(t,n) ) \to 0
\end{equation}
as $|s-t| \to 0$.
\end{lma}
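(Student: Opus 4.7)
The plan is to obtain $K(s,n)$ as the attractor of a uniform rescaling of the level-$k$ iterate of the IFS that generates $Q_\infty$. Write $\Phi = \{f_1, \ldots, f_N\}$ for the equicontractive IFS of homotheties generating $Q_\infty$, with common contraction ratio $c$ satisfying $Nc^{\sup \Delta} = 1$. Without loss of generality I take $Q_\infty$ so that every corner of $[0,1]^d$ is a fixed point of some map in $\Phi$ (this is easily arranged for standard Sierpinski-type examples and will guarantee the convex-hull condition). Fix $n$ and pick $k = k(n) \in \mathbb{N}$ with $c^k < 1/n$. Let $\{g_\mathbf{i} : \mathbf{i} \in \{1,\ldots,N\}^k\}$ denote the $N^k$ level-$k$ compositions; each $g_\mathbf{i}$ is a homothety of ratio $c^k$, and I write $g_\mathbf{i}(x) = c^k(x - p_\mathbf{i}) + p_\mathbf{i}$ where $p_\mathbf{i}$ is the unique fixed point.

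For each $s \in (0, \sup \Delta]$ set $\rho(s) = N^{-k/s}$, so $\rho(\sup \Delta) = c^k$ and $\rho(s) \le c^k$ throughout. Define $\Phi_{s,n} = \{g_\mathbf{i}^{(s)}\}_\mathbf{i}$ by $g_\mathbf{i}^{(s)}(x) = \rho(s)(x - p_\mathbf{i}) + p_\mathbf{i}$, and let $K(s,n)$ be the attractor. Convexity of $[0,1]^d$ and $p_\mathbf{i} \in [0,1]^d$ give $g_\mathbf{i}^{(s)}([0,1]^d) \subseteq g_\mathbf{i}([0,1]^d)$ (since $y \mapsto \lambda y + (1-\lambda)p_\mathbf{i}$ with $\lambda = \rho(s)/c^k \le 1$ preserves $[0,1]^d$), so the OSC for $\Phi^k$ (with $U = (0,1)^d$) transfers to $\Phi_{s,n}$, and Hutchinson's theorem gives $\hd K(s,n) = s$ because $N^k \rho(s)^s = 1$. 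For \eqref{1-over-n}, the inclusion $K(s,n) \subseteq \bigcup_\mathbf{i} g_\mathbf{i}([0,1]^d)$ combined with the fact that each cube $g_\mathbf{i}([0,1]^d)$ has diameter $c^k \sqrt{d} < \sqrt{d}/n$ and contains $p_\mathbf{i}$, which is a point of both $Q_\infty$ (as a periodic point of $\Phi$) and $K(s,n)$ (as the fixed point of $g_\mathbf{i}^{(s)}$), yields $d_\mathcal{H}(K(s,n), Q_\infty) \le c^k \sqrt{d}$. For the continuity \eqref{ensureclosed} on $(0, \sup \Delta]$ the map $s \mapsto \rho(s) = N^{-k/s}$ is continuous, the translations $p_\mathbf{i}$ are fixed, and the attractor of a uniformly contracting IFS depends continuously on the parameters of its maps in the Hausdorff metric.

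It remains to extend to $s = 0$ and to verify the convex-hull condition. As $s \to 0^+$ we have $\rho(s) \to 0$, and each cube $g_\mathbf{i}^{(s)}([0,1]^d)$ collapses onto its fixed point, so $K(s,n)$ converges in the Hausdorff metric to the finite set $F_0 = \{p_\mathbf{i} : \mathbf{i} \in \{1,\ldots,N\}^k\}$. I set $K(0,n) := F_0$, which has Hausdorff dimension $0$, sits within $c^k \sqrt{d}$ of $Q_\infty$, and may be viewed as the (degenerate) self-similar attractor of the constant-map IFS $\{x \mapsto p_\mathbf{i}\}$; this preserves \eqref{ensureclosed} at $s = 0$. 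The convex-hull condition follows from the corner-fixing assumption: each corner of $[0,1]^d$ is a fixed point of some $f_i$, hence of some $g_\mathbf{i}$, hence of $g_\mathbf{i}^{(s)}$, so it lies in $K(s,n)$ for every $s \in [0, \sup \Delta]$ and forces the convex hull to equal $[0,1]^d$. The main obstacle I anticipate is this boundary case $s = 0$: dimension zero combined with Hausdorff-closeness to $Q_\infty$ pushes the construction to a degenerate, constant-map IFS, and one must agree to interpret this as a self-similar set; once this is granted, the rest of the argument is a direct application of Hutchinson's theorem together with the elementary observation that uniformly shrinking cylinders toward their fixed points preserves the open set condition.
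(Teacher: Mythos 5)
Your proposal is correct and takes essentially the same approach as the paper: shrink the level-$k$ cylinders of the IFS generating $Q_\infty$ to ratio $N^{-k/s}$, so that the OSC is inherited and Hutchinson's theorem gives dimension $s$, with the Hausdorff distance bounds coming from containment in the level-$k$ cylinder cubes. The only cosmetic difference is the anchoring of the shrunk maps (you anchor at the fixed point $p_\mathbf{i}$ of each level-$k$ composition, the paper anchors at $f_{i_1}\circ\cdots\circ f_{i_k}(0)$); your choice has the minor advantage of automatically preserving the corners of $[0,1]^d$ in $K(s,n)$, and in fact your ``WLOG'' is automatic since the convex-hull hypothesis on $Q_\infty$ already forces each corner of $[0,1]^d$ to be the fixed point of some generating map.
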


\begin{proof}
  To see that such a collection of sets exists, fix $n\in \mathbb{N}$ and assume $Q_\infty$ is generated by the iterated function system (IFS) $\left\{f_{i}\right\}_{i=1}^a$ where each $f_i$ is a contracting homothety with  common contraction ratio $c$. Define $k$ to be the smallest integer such that $c^k  \le \frac{1}{n}$. Since $Q_\infty$ has the maximal dimension $\sup \Delta$, for any $s \in (0,\sup \Delta)$, the set $K(s,n)$ can be defined to be the self-similar set satisfying $\Haus K(s,n) = s$ and generated by $a^k$ homotheties $\{g_{n,j}^s\}_{j=1}^{a^k}$ of contraction ratio $a^{-k/s} < a^{-k/  \sup \Delta }= c^k$ (where the equality follows from \cite[Theorem 9.3]{Fa}) such that for any $j\in\{1,\ldots,a^k\}$, the image of $[0,1]^d$ under $g_{n,j}^s$ lies in the corner of the image of $[0,1]^d$ under $f_{i_1} \circ f_{i_2} \circ \cdots \circ f_{i_k}$ for some $i_1,i_2,\ldots,i_k \in \{1,2,\ldots,a \}$.   To do this in a canonical way we assume that $f_{i_1} \circ f_{i_2} \circ \cdots \circ f_{i_k}(0) =g_{n,j}^s(0)$. The set $K(0,n)$ can be defined similarly where we allow contraction ratio $0$ and the set $K(\sup \Delta, n)$ is simply defined to be $Q_\infty$.   This guarantees that
\[
d_\mathcal{H}(K(s,n), Q_\infty) \leq \sqrt{d} c^k \leq \sqrt{d}/n
\]
which shows \eqref{1-over-n}.  Also, since the images $g_{n,j}^s([0,1]^d)$ are placed in the same corner for each $s$, we also get, for fixed $n$ (and therefore fixed $k$) and for $s>t$,
\[
d_\mathcal{H}(K(s,n), K(t,n) ) \leq \sqrt{d} (a^{-k/s } - a^{-k/t})  \to 0
\]
as $|s-t| \to 0$.  This proves \eqref{ensureclosed}.
\end{proof}

Since $\Delta$ is $\mathcal{F}_\sigma$ we may write it as $\Delta= \bigcup_n \Delta_n$ where each $\Delta_n$ is closed.  Let
	\[
	\Omega_n = \{ K(s,n) : s \in \Delta_n   \} \subset \mathcal{K}([0,1]^d)
	\]
and note that each $\Omega_n$ is closed by \eqref{ensureclosed} and the fact that each  $\Delta_n$ is closed.  Since $\mathcal{K}([0,1]^d)$ is separable in the Hausdorff metric, for each $n$ we can find a countable subset $\Omega_{n,0} \subseteq \Omega_n$ such that $\overline{\Omega_{n,0}} = \Omega_n$.  Let $\Omega^0 = \{ Q_1, Q_2, Q_3, \dots\}$ be an enumeration of $\bigcup_n\Omega_{n,0}$.  For technical reasons we further assume that if $\sup \Delta\in \Delta$ is an isolated point then  the set $Q_j$ with $\hd Q_j = \sup \Delta $ is repeated infinitely often in the sequence $(Q_i)_i$ (thus making $\Omega_0$ a multiset).

\begin{lma} \label{closed!}
We have
\[
\overline{\Omega^0} = \{Q_\infty\} \cup \bigcup_n \Omega_n.
\]
\end{lma}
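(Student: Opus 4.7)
My plan is to prove the two set inclusions separately. The inclusion $\overline{\Omega^0} \supseteq \{Q_\infty\} \cup \bigcup_n \Omega_n$ is essentially immediate: for each $n$, density gives $\Omega_n = \overline{\Omega_{n,0}} \subseteq \overline{\Omega^0}$, and for $Q_\infty$ I would pick any element $K(s_n,n) \in \Omega_{n,0}$ for each $n$ such that $\Omega_{n,0}$ is nonempty (there are infinitely many such $n$ because $\Delta$ is infinite and $\Delta = \bigcup_n \Delta_n$), and then \eqref{1-over-n} gives $K(s_n,n) \to Q_\infty$ in the Hausdorff metric.

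The substance lies in the reverse inclusion $\overline{\Omega^0} \subseteq \{Q_\infty\} \cup \bigcup_n \Omega_n$. I would take a convergent sequence $(Q_{i_k})_{k \in \mathbb{N}}$ in $\Omega^0$ with Hausdorff limit $L$ and show that $L$ lies in the right-hand side. By construction each $Q_{i_k}$ belongs to $\Omega_{n_k,0} \subseteq \Omega_{n_k}$ for some index $n_k \in \mathbb{N}$, and I would split into two cases according to the behaviour of the sequence $(n_k)$.

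If $(n_k)$ admits a bounded subsequence, then by passing to a further subsequence I may assume $n_k = n$ is constant. Then $Q_{i_k} \in \Omega_n$ for all $k$, and since $\Omega_n$ is closed (by \eqref{ensureclosed} together with the fact that $\Delta_n$ is closed, as already noted just before the statement), the limit $L$ lies in $\Omega_n \subseteq \bigcup_n \Omega_n$. If instead $(n_k)$ has no bounded subsequence then $n_k \to \infty$, in which case \eqref{1-over-n} yields
\[
d_\mathcal{H}(Q_{i_k}, Q_\infty) = d_\mathcal{H}(K(s_{i_k}, n_k), Q_\infty) \le \sqrt{d}/n_k \to 0,
\]
so $L = Q_\infty$. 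Combining the two cases proves the inclusion and completes the proof.

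I do not anticipate any real obstacle here; the only subtle point is remembering that the two possible behaviours of $(n_k)$ produce the two different contributions on the right-hand side, and that the $Q_\infty$ term is unavoidable precisely because sequences drawn from deeper and deeper $\Omega_{n,0}$ must collapse onto $Q_\infty$ via \eqref{1-over-n}.
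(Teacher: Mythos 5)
Your argument is correct and follows essentially the same route as the paper's proof: you split on whether $(n_k)$ admits a bounded subsequence (pass to a constant one and invoke the closedness of $\Omega_n$) or tends to infinity (invoke \eqref{1-over-n} to force the limit onto $Q_\infty$); the paper instead frames the reverse inclusion as the statement that $\{Q_\infty\}\cup\bigcup_n\Omega_n$ is closed, but the dichotomy and the use of \eqref{1-over-n}, \eqref{ensureclosed} and compactness of $\Delta_n$ are the same. One small slip in your forward inclusion: $\Delta$ being infinite does not by itself guarantee that infinitely many $\Delta_n$ are nonempty (a single $\Delta_n$ could already be infinite), so your parenthetical justification for choosing $n\to\infty$ is not valid as stated; this is harmless, however, since one may either assume without loss of generality that every $\Delta_n$ is nonempty, or observe directly that $\sup\Delta\in\Delta_m$ for some $m$, whence $Q_\infty=K(\sup\Delta,m)\in\Omega_m\subseteq\overline{\Omega^0}$ without any appeal to \eqref{1-over-n}.
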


\begin{proof}
  First note that $\overline{\Omega^0} \supseteq \bigcup_n \overline{\Omega_{n,0}} = \bigcup_n \Omega_n$  and $ Q_\infty \in \overline{\Omega^0} $ by construction (recall \eqref{1-over-n}) and so one direction is obvious.  The other direction is more difficult, but follows from the way we defined the sets $K(s,n)$.  It suffices to argue that $\{Q_\infty\} \cup \bigcup_n \Omega_n$ is a closed set.   Let $K(s_i, n_i) \in  \bigcup_n \Omega_n$ be a convergent sequence of sets, where for each $i$, $K(s_i, n_i) \in \Omega_{n_i}$.  By taking a subsequence if necessary  we may assume that either $n_i \to \infty$, in which case $K(s_i, n_i) \to Q_\infty$ by \eqref{1-over-n}, or $n_i=n$ is constant.  In this second case, we may take a further subsequence (using compactness of $\Delta_n$) where  $s_i \to s \in \Delta_n$.  Therefore  $K(s_i, n_i) \to K(s,n) \in \Omega_n$  by \eqref{ensureclosed}.  This proves the claim.
\end{proof}

We are now ready to build $F$, but there are two slightly different cases depending on whether or not $\inf \Delta = 0$. The basic  idea is to arrange shrinking copies of the sets $Q_i$ in such a way that a given miniset only sees a significant proportion of one of the sets $Q_i$, thus making the microsets easier to understand.   Using Lemma \ref{closed!} we then argue that microsets generated this way are essentially restricted to $\overline{\Omega^0}$ plus another set $Q_0$ which has dimension $\inf \Delta$. 

First, suppose that $\inf \Delta > 0$ and let $Q_0 \subseteq [0,1]^d$ be a self-similar set generated by homotheties and satisfying the strong separation condition which has Hausdorff dimension $\inf \Delta$. We now construct $F$ based on the structure of $Q_0$, see Figure \ref{fig:construction-of-F} for an illustration.  Suppose  $Q_0$ is generated by $b\geq 2$ similarity maps $\left\{h_{u}\right\}_{u=1}^b$ with common  contraction ratio $c_0$ and let $\mathcal{I}=\{1,\ldots, b\}$. Let $(\alpha_i)_{i\in \mathbb{N}}$ be a sequence of distinct integers which increases super exponentially, i.e. $\lim_{n\rightarrow \infty} \frac{\log \alpha_n}{n} = \infty $ and assume $\alpha_0=0$. Let 
	\begin{align*}
	\mathcal{I}_{\alpha_i}=\{(u_1,\ldots,u_{\alpha_i})\in \mathcal{I}^{\alpha_i} : \;&(u_1,\ldots,u_{\alpha_{i-1}}) = (1,\ldots,1) \text{ and} \\ &(u_{\alpha_{i-1}+1},\ldots,u_{\alpha_i}) \ne (1,\ldots,1) \}.
	\end{align*}
	Then for all $i\in \mathbb{N}$ define 
	\[
	Q^*_i= \bigcup_{(u_1,\ldots,u_{\alpha_i})\in \mathcal{I}_{\alpha_i}}h_{u_1}\circ h_{u_2}\circ \cdots \circ h_{u_{\alpha_i}}(Q_i)
	\]
	and let
	\[
	F = \overline{\bigcup_{i \in \mathbb{N}} Q_{i}^*}.
	\]
	
\begin{center}
\begin{figure}[!t]
  \begin{tikzpicture}[scale=0.3]

  \draw (0,9) -- (27,9);
  \draw (0,6) -- (9,6); \draw (18,6) -- (27,6);
  \draw (0,3) -- (3,3); \draw (6,3) -- (9,3);
  \draw (0,0) -- (1,0); \draw (2,0) -- (3,0); \draw (6,0) -- (7,0); \draw (8,0) -- (9,0); 

  \node[above] at (22.5,6) {$Q_1$};
  \node[above] at (2.5,0) {$Q_2$};
  \node[above] at (6.5,0) {$Q_2$};
  \node[above] at (8.5,0) {$Q_2$};

  \end{tikzpicture}
  \caption{Construction of $F$ with $Q_0$ being the middle third Cantor set, $\alpha_1=1$, and $\alpha_2=3$. Here, for example,  $Q_2^*$ consists of 3 copies of $Q_2$. }
  \label{fig:construction-of-F}
\end{figure}
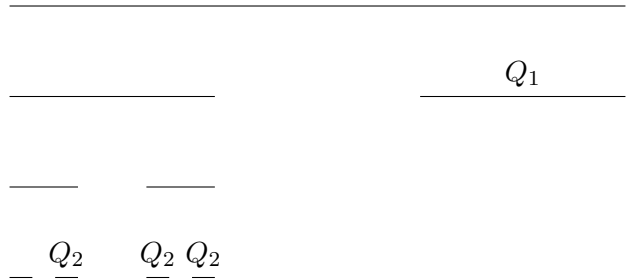
\end{center}

%
%
%
%
%
%
%

	We claim that $F$ has the desired properties. Since each $Q_i$ is a miniset of $F$, we clearly have
	\[
	\overline{\Omega^0} \subseteq \mathcal{G}_F.
	\]
Furthermore, since $d_{\mathcal{H}}(h_1^{-1} \circ \cdots \circ h_1^{-1}(F) \cap [0,1]^d, Q_0) \le c_0^{\alpha_{i+1}-\alpha_i}$ where $h_1^{-1}$ is composed $\alpha_i$ times, we see that $Q_0 \in \mathcal{G}_F$. Therefore, by Lemma \ref{closed!},
\[
  \Delta = \{ \hd E : E \in \{Q_0\} \cup \overline{\Omega^0} \} \subseteq \{ \hd E : E \in \mathcal{G}_F \}.
\]	
It remains to show that we do not get any `unwanted' microsets appearing whose Hausdorff dimension is outside of $\Delta$. Let us first deal with the (easy) case of microsets in $\mathcal{G}_F $ which are  actually  minisets, that is, they do not have   unbounded scaling sequence.  It suffices to consider the Hausdorff dimension of the intersection of $F$ with an arbitrary closed  cube.  Any cube whose interior intersects $F$ will either intersect only finitely many of the $Q_i^*$ in which case it will have the maximal Hausdorff dimension among this finite collection, or it will intersect $Q_i^*$ for all sufficiently large $i$ in which case it will have Hausdorff dimension $\sup \Delta$.  In either case the Hausdorff dimension belongs to $\Delta$.  We now consider  microsets which are not minisets.   Let $E \in \mathcal{G}_F$ and assume $E$ has unbounded scaling sequence.  Therefore we can find cubes $J_k$ and vectors 
\[
t_k=\left(\min_{(x_1,\ldots,x_d)\in J_k}x_1,\ldots,\min_{(x_1,\ldots,x_d)\in J_k}x_d\right)
\]
such that 
\[
\frac{\sqrt{d}}{\text{diam }J_k}(F \cap J_k - t_k) \to E
\]
and $\text{diam }J_k \to 0$.  If for all large enough $k$ we have that $J_k$ intersects more than one of the sets $Q_{i}^*$, then for  all but at most one of the sets $Q_{i}^*$, the constituent pieces $h_{u_1}\circ h_{u_2}\circ \cdots \circ h_{u_{\alpha_i}}(Q_i)$ become arbitrarily small compared with the diameter of $J_k$. Thus, for such $Q_i^*$, the portion intersecting $J_k$ will become arbitrarily close to a subset of $Q_0$ and so the microset obtained is just a subset of $Q_0$.  Therefore we may assume that $J_k$ intersects only one of the sets $Q_i^*$ for all large enough $k$, since the contribution from other sets either approaches a subset of $Q_0$, a singleton, or disappears completely.

Either the set of $i$ such that $Q_i^*$ intersects $J_k$ for some $k$ is bounded, in which case $E$ is a microset of one of the sets $Q_i$, and therefore $\hd E \in \Delta$, or the set of $i$ such that $Q_i^*$ intersects $J_k$ for some $k$ is unbounded, in which case $E$ is either a microset of $Q_0$ or a microset of a set from $\overline{\Omega^0}$, depending on how large the constituent pieces of $Q_i^*$ are with respect to each $J_k$. This uses the assumption that $\text{diam }J_k \to 0$.  Lemma \ref{closed!} implies that in all cases $\hd E \in \Delta$.  This completes the proof in the case $\inf \Delta>0$.

The proof in the case $\inf \Delta=0$ is similar, but actually more straightforward, and so we only sketch the idea.  We let
\[
Q_i^* = 2^{-i^i} Q_i +(2^{-i}, 0, \dots, 0)
\]
and
\[
F =\overline{ \bigcup_{i \in \mathbb{N}} Q_i^*} ,
\]
that is we scale the sets $Q_i$ by a superexponential factor and then  arrange them in an exponentially decreasing sequence accumulating at 0.  Arguing as above, a microset of $F$ with unbounded scaling sequence  is either  a microset of  the set $\{0\} \cup \{2^{-i} : i \in \mathbb{N}\}$  (which plays the role of $Q_0$ above) or  a microset of a set from $\overline{\Omega^0}$.

	\section{Small microsets}
	In this section, we will prove Theorem \ref{Sing}. 
	If $F$ has a genuine microset of zero Hausdorff dimension then it is clear that $F$ has zero lower dimension. Therefore we just need to show the other direction, this will be done by proving the contrapositive. Let $F$ be a compact subset of $\mathbb{R}^d$ such that the gallery of $F$ contains only microsets of cardinality at least two. To prove our result we just need to show $\Lower F >0$. 
	
	Let $k> 1$ be an integer. In what follows cubes are assumed to be oriented with the coordinate axes.  We say that $F$ satisfies property $P(k)$ if for every $x\in F$ and $R\in (0,1)$ the following statement is satisfied:
	\begin{itemize}
		\item[] If $Q(x,R)$ is the closed cube centred at $x$ with side length $R$, then there exist two cubes with disjoint interiors and with centres in $F \cap Q(x,R)$ and side lengths  $2^{-k}R$.
	\end{itemize}
	If $F$ fails property $P(k)$ for all  integers $k$ then for all $k$ there is a cube $Q_k$ such that $Q_k\cap F$ can be covered by one cube of side length $(2\sqrt{d})\, 2^{-k}$ times that of $Q_k$. It follows that  $T_k(F) \cap [0,1]^d$  converges in the Hausdorff metric to a singleton  as $i\to\infty$, where $T_k$ is the unique homothety mapping $Q_k$ to $[0,1]^d$. Therefore by our assumption we see that $F$ satisfies $P(k)$ for some integer  $k>1$, which we fix from now on.

	Let $x\in F$ be arbitrarily chosen and fix $0<r<R \leq 1$.  Consider $Q(x,R)$ and since $F$ satisfies $P(k)$ we see that there exist two disjoint cubes with centres in $F \cap Q(x,R)$ and side lengths  $2^{-k}R$.  Repeat the argument inside each of these cubes and then inside each of the four cubes at the next level and so on.  Run this argument $m$ times where $m$ is chosen to be the largest integer such that $2^{-km}R > r$.  It follows that there are
	\[
	2^m \geq 2^{-1} \left( \frac{R}{r} \right)^{1/k}
	\]
	disjoint cubes of side length at least $r$ contained in $Q(x,2R)$.  It follows that $\Lower F\geq 1/k>0$, as desired.

	\section{Further remarks and problems}
	
	\subsection{Dimensions which need not appear as dimensions of microsets} \label{dimappear}
	
	Here we elaborate on the following question: given a compact set $F \subset  \mathbb{R}^d$, which dimensions of $F$ necessarily appear as the Hausdorff dimension of a  microset of $F$ with unbounded scaling sequence?  The answer is: the lower and Assouad dimensions necessarily do, but the Hausdorff, packing and upper and lower box dimensions need not.   Note that it is vital to include the requirement that the scaling sequences are unbounded as otherwise the set itself appears as a microset and the question is trivial.  Concluding that the upper and lower box dimensions do not necessarily appear even in the closure of $\{ \hd E : E \in \mathcal{G}_F \text{ has unbounded scaling sequence}  \}$ is straightforward. For example, the set $\{1/n : n\in \mathbb{N} \}$ has  box dimension $1/2$, but all its microsets with unbounded scaling sequence are either an interval or a finite collection of equally spaced points.

	Concluding that the Hausdorff and packing  dimensions do not necessarily appear as Hausdorff dimensions of microsets with unbounded scaling sequence is a little more subtle and relies on our proof of Theorem \ref{Thmain2}.  Let $\Delta = \{1, 1/2-1/(n+1) : n = 1, 2, \dots\}$, which is clearly $\mathcal{F}_\sigma$,  and let $F\subset \mathbb{R}$ be the set constructed in the proof of Theorem \ref{Thmain2} given this $\Delta$.  Note that we may assume $\Omega_0 = \{Q_1,Q_2, \dots\}$ where $Q_n$ has dimension $1/2-1/(n+1)$ and importantly $Q_\infty \notin \Omega_0$.  It follows that $\hd F =\pd F = \sup_n \hd Q_n = 1/2 \notin \Delta$ as required.  The important point is that the $Q_n$ are chosen such that $\hd Q_n \to 1/2$, but $Q_n \to [0,1]$ in the Hausdorff metric. 
	
	Note that in the above example, the Hausdorff and packing dimensions do appear as accumulation points of the set of Hausdorff dimensions of microsets and so we pose the following question.
	
	\begin{ques}
		Is it true that if $F \subset  \mathbb{R}^d$ is compact, then $\hd F$ appears in the closure of $\{ \hd E : E \in \mathcal{G}_F \text{ has unbounded scaling sequence} \}$?
	\end{ques}

	\subsection{Hausdorff measures of microsets}
	Let us first recall that it is possible to obtain the following slightly stronger version of Theorem \ref{MicroAssouad}; see \cite[Theorem 1.3]{F2}.
	
	\begin{thm}
		Let $F$ be a compact set, then
		\[
		\Assouad F=\max \{s\geq 0 :  E\in\mathcal{G}_F \text{ and } \mathcal{H}^s(E)>0\}.
		\]
	\end{thm}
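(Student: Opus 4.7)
The plan is to split the claim into two inequalities. The easy direction $\Assouad F \geq \sup\{s : \exists E \in \mathcal{G}_F \text{ with } \mathcal{H}^s(E) > 0\}$ is immediate: if $\mathcal{H}^s(E) > 0$ then $\hd E \geq s$, and by Theorem \ref{MicroAssouad} we have $\hd E \leq \Assouad F$. The content of the theorem is therefore the reverse inequality together with the attainment of the supremum.

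Write $s = \Assouad F$; we must exhibit some $E \in \mathcal{G}_F$ with $\mathcal{H}^s(E) > 0$. The first step is to unpack the defining property of the Assouad dimension: for each $n \in \mathbb{N}$ there exist $x_n \in F$, $R_n > 0$ and $r_n \in (0, R_n)$ with $R_n / r_n \to \infty$ such that
\[
N_{r_n}(F \cap B(x_n, R_n)) \geq (R_n/r_n)^{s - 1/n}.
\]
Letting $T_n$ be the homothety scaling $B(x_n, R_n)$ to (a fixed cube inside) $[0,1]^d$, the sets $D_n = T_n(F) \cap [0,1]^d$ are minisets of $F$ with unbounded scaling sequence, and inherit a one-scale lower bound on their covering numbers at scale $r_n/R_n$.

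The main obstacle is that these one-scale lower bounds, taken alone, only guarantee that a Hausdorff limit $E$ of (a subsequence of) the $D_n$ satisfies $\hd E \geq s$, which is not enough to conclude $\mathcal{H}^s(E) > 0$. I would overcome this by iterating the zoom-in construction in the spirit of Furstenberg: given a miniset with high density at some scale, locate a sub-ball inside it whose rescaling again has nearly maximal density at a finer scale, and repeat this procedure $n$ times. Because a miniset of a miniset of $F$ is itself a miniset of $F$ (with a larger scaling coefficient), every object produced this way still lies in the gallery. Attaching natural normalised counting measures to each stage and keeping track of the uniform constants, one obtains a sequence of minisets $\widetilde D_n$ carrying probability measures $\mu_n$ that satisfy a Frostman-type bound $\mu_n(B(y,\rho)) \leq C \rho^s$ for all $\rho \in [2^{-n}, 1]$, with $C$ independent of $n$.

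A diagonal argument together with the compactness of $(\mathcal{K}([0,1]^d), d_\mathcal{H})$ and weak-$*$ compactness of probability measures on $[0,1]^d$ then extracts a limit pair $(E, \mu)$ with $E \in \mathcal{G}_F$ and $\mu$ supported on $E$ satisfying $\mu(B(y,\rho)) \leq C\rho^s$ for all $\rho \in (0,1]$. The mass distribution principle (Frostman's lemma) yields $\mathcal{H}^s(E) \geq \mu(E)/C > 0$, finishing the proof. One last technicality, handled exactly as in Section \ref{Microset}, is to ensure that the limit $E$ meets the interior of $[0,1]^d$ so that it genuinely belongs to $\mathcal{G}_F$ in the sense of Definition \ref{def:gallery}; this is arranged by recentering each miniset so that the chosen dense sub-region sits well inside $[0,1]^d$ before rescaling. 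The hardest ingredient is the iterated zoom-in combined with the uniform Frostman control, as the naive one-scale construction alone does not suffice.
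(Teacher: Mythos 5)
The paper does not prove this theorem: it is quoted as a ``slightly stronger version'' of Theorem \ref{MicroAssouad} and the proof is attributed entirely to \cite[Theorem 1.3]{F2}. So there is no in-paper proof to match your argument against; your proposal is a reconstruction and should be judged on its own merits.

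Your overall strategy --- build minisets carrying measures with uniform $s$-Frostman control, pass to a weak-$*$ limit, and invoke the mass distribution principle --- is indeed the right one and is in the spirit of the cited proof. The easy direction and the limiting/compactness argument at the end are fine. However, the central step as you describe it has a genuine gap. You propose to iterate the zoom-in as follows: ``given a miniset with high density at some scale, locate a sub-ball inside it whose rescaling again has nearly maximal density at a finer scale, and repeat.'' The definition of Assouad dimension only guarantees that \emph{somewhere in} $F$ there is a ball $B(x,R)$ whose $r$-covering number is large; it says nothing about the finer-scale behaviour of $F$ \emph{inside} that ball. Once you have pinned down $B(x_1,R_1)$ and restricted attention to $B(x_1,R_1)\cap F$, there is no reason for any sub-ball $B(x_2,R_2)\subseteq B(x_1,R_1)$ to have $N_{r_2}(B(x_2,R_2)\cap F)$ comparable to $(R_2/r_2)^{s}$ at any finer scale $r_2<R_2$: the locations where $F$ is dense can change completely from scale to scale, and $F\cap B(x_1,R_1)$ may simply be sparse below scale $r_1$. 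Iterating the construction therefore does not produce a nested family of dense sub-balls, and the claimed uniform Frostman bound for $\mu_n$ on all of $[2^{-n},1]$ does not follow.

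The standard way to repair this (and what the cited reference essentially does) is \emph{not} a sequential zoom-in but a single-miniset argument with a large dynamic range. One fixes $x_n,R_n,r_n$ with $R_n/r_n\to\infty$ and $N_{r_n}(B(x_n,R_n)\cap F)\ge(R_n/r_n)^{s-\varepsilon_n}$, passes to the $2^d$-ary tree of $T_n(F)\cap[0,1]^d$ over the $\approx\log_2(R_n/r_n)$ dyadic levels, and then applies a pruning/pigeonhole argument on this single finite tree to extract a subtree, rooted at some vertex, that is ``locally $(s-2\varepsilon_n,m)$-large'' in the sense of Definition \ref{loc} --- dually to Lemma \ref{reg}, which you should compare. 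That subtree supports a measure with a genuine multi-scale Frostman bound over a window of depth $\to\infty$, and the constant does not degrade because the whole construction lives at a single pair of reference scales $(r_n,R_n)$. Only then does one renormalise the root of the subtree to $[0,1]^d$ (again a miniset of $F$) and take a Hausdorff/weak-$*$ limit. Your sketch conflates this intrinsic tree-pruning step with recursive re-selection of balls; the latter loses control of the constants and can fail outright.
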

	
	Thus we can find a microset of $F$ whose $s$-Hausdorff measure is positive, where $s$ is the Assouad dimension of $F$. It is very natural to ask whether the following dual result for the lower dimension holds.
	
	\begin{ques}
		Is it true that if $F$ is compact, then
		\[
		\Lower F=\min \{s\geq 0 : E\in\mathcal{G}_F \text{ and } \mathcal{H}^s(E)<\infty\}?
		\]
	\end{ques}

	\subsection{Set theoretic complexity of $\{ \hd E : E \in \mathcal{G}_F \} $}
	
We proved that the set of dimensions attained by a gallery can be surprisingly complicated: it can be  $\mathcal{F}_\sigma$, despite the gallery itself being $\mathcal{F}$.  However, we are unaware if the set of attained dimensions can be any more complicated than $\mathcal{F}_\sigma$.  We remark that
\[
\Big\{ \{ \hd E : E \in \mathcal{G}_F \} \  : \  \text{$F \subseteq [0,1]^d$  compact }\Big\}
\]
must have cardinality $2^{\aleph_0}$, since we have a natural surjection from the set of  compact sets $F \subseteq [0,1]^d$ onto this set (this bounds the cardinality from above, and the lower bound is trivial). In particular, there must be sets  (which contain their infimum and supremum) which cannot be obtained as the set of dimensions attained by a gallery.

	\begin{ques}
		If  $F \subseteq [0,1]^d$ is compact, then is  $\{ \hd E : E \in \mathcal{G}_F \} \subseteq [0,d]$ an  $\mathcal{F}_\sigma$ set?  If not, does it belong to a finite Borel class?
	\end{ques}

	\providecommand{\bysame}{\leavevmode\hbox to3em{\hrulefill}\thinspace}
	\providecommand{\MR}{\relax\ifhmode\unskip\space\fi MR }
	\providecommand{\MRhref}[2]{%
		\href{http://www.ams.org/mathscinet-getitem?mr=#1}{#2}
	}
	\providecommand{\href}[2]{#2}

	\section*{Acknowledgments} 
	
	JMF was financially supported by a Leverhulme Trust Research Fellowship (RF-2016-500) and an EPSRC Standard Grant (EP/R015104/1).  DCH was financially supported by an EPSRC Doctoral Training Grant (EP/N509759/1). AK was financially supported by the Finnish Center of Excellence in Analysis and Dynamics Research, the Finnish Academy of Science and Letters, and the V\"ais\"al\"a Foundation. HY was financially supported by the University of St Andrews.  This research project began while all four authors were in attendance at the Institut Mittag-Leffler during the 2017  semester programme \emph{Fractal Geometry and Dynamics}.

  The authors thank Tamas Keleti and Yuval Peres for pointing out an error in an earlier version of this paper and Richard Balka and Michael Hochman for helpful discussions on the topic.


\begin{thebibliography}{{Sze}75}
		
		\bibitem[CR14]{CR}
		CH.~Chen and E.~Rossi, \emph{Locally rich compact sets}, Illinois J. Math., \textbf{58}(3), (2014), 779--806.
		
		\bibitem[F14]{Fa}
		K.~J.~Falconer, \emph{Fractal geometry: Mathematical foundations and
			applications}, 3rd Ed., Wiley, (2014).


		
		\bibitem[FFS15]{FFS}
		A.~Ferguson, J.~M.~Fraser and T.~Sahlsten, \emph{Scaling scenery of $(\times m,\times n)$ invariant measures}, Adv. Math., \textbf{268}, (2015), 564--602.
		
		\bibitem[F14]{F}
		J.~M.~Fraser, \emph{Assouad type dimensions and homogeneity of fractals}, Trans. Amer. Math. Soc., \textbf{366}, (2014), 6687--6733.
		
		\bibitem[F17]{F2}
		J.~M.~Fraser, \emph{Distance sets, orthogonal projections, and passing to weak tangents}, Israel J. Math., \textbf{226}, (2018),   851--875.
		
		\bibitem[FY17]{FY1}
		J.~M.~Fraser and H.~Yu, \emph{Arithmetic patches, weak tangents, and dimension}, Bull. Lond. Math. Soc. \textbf{50}, (2018), 85--95.
		
		\bibitem[F08]{Fu} H.~Furstenberg, \emph{Ergodic fractal measures and dimension conservation}, Ergodic Theory Dynam. Systems, \textbf{28}, (2008), 405--422.
		
		\bibitem[H10]{H10}M. Hochman, \emph{Dynamics on fractals and fractal distributions}, Preprint, available at arXiv:1008.3731
		
		\bibitem[HS12]{HS12}M. Hochman and P. Shmerkin, \emph{Local entropy averages and projections of fractal measures}, Ann. of Math. (2), \textbf{175}, (2012), 1001--1059.
		
		\bibitem[KOR18]{KOR}
		A.~K{\"a}enm{\"a}ki, T.~Ojala, and E.~Rossi, \emph{Rigidity of
			quasisymmetric mappings on self-affine carpets}, Int. Math. Res. Not. IMRN, \textbf{12}, (2018), 3769--3799.
		
		\bibitem[KR16]{KR}
		A.~K{\"a}enm{\"a}ki and E.~Rossi, \emph{Weak separation condition, Assouad dimension, and Furstenberg homogeneity}, Ann. Acad. Sci. Fenn. Math., \textbf{41}, (2016), 465--490.
		
		\bibitem[KSS15]{KSS15}
		A.~K{\"a}enm{\"a}ki, T. Sahlsten and P. Shmerkin, \emph{Dynamics of the scenery flow and geometry of measures}, Proc. Lond. Math. Soc., \textbf{110}, (2015), 1248--1280.
		
		\bibitem[M11]{mackay}
		J. M. Mackay,
		\emph{Assouad dimension of self-affine carpets}, Conform. Geom. Dyn., {\bf 15}, (2011), 177--187.
		
		\bibitem[MT10]{MT} J.~Mackay and J.~Tyson, \emph{Conformal dimension. Theory and application}, University
		Lecture Series, 54. American Mathematical Society, Providence, RI, (2010).
		
		\bibitem[M95]{Ma1}
		P.~Mattila, \emph{Geometry of sets and measures in euclidean spaces: Fractals
			and rectifiability}, Cambridge Studies in Advanced Mathematics, Cambridge
		University Press, (1995).
		
		
		
		
	\end{thebibliography}
\end{document}